\newtheorem{lem}{Lemma}
\newtheorem{thm}{Theorem}
\theoremstyle{remark}
\newcommand {\E} {\mathbb{E}}
\DeclareMathOperator {\var}{Var_{[0,1]}}
\numberwithin{equation}{section} \numberwithin{theorem}{section}
\numberwithin{corollary}{section}
\begin{document}

\begin{center}
   {\bf {\Large On permutations of Hardy-Littlewood-P\'olya sequences}}

\end{center}
\vspace{.3cm}
\renewcommand{\baselinestretch}{1.05}
\normalsize

\begin{center}
   {\sc
    Christoph Aistleitner
    \footnote{Department of Mathematics A, Graz University of Technology,
    Steyrergasse 30, A--8010 Graz, Austria,
    email: {\tt aistleitner@finanz.math.tugraz.at} \ Research supported by FWF grant S9603-N23 and a MOEL scholarship of the \"Osterreichische
    Forschungsgemeinschaft.} \qquad
    Istv\'an Berkes
    \footnote{Institute of Statistics, Graz University of Technology,
    M\"unzgrabenstrasse  11, A--8010 Graz, Austria,
    email: {\tt berkes@tugraz.at} \ Research supported by the FWF Doctoral Program on Discrete Mathematics (FWF DK W1230-N13), FWF grant S9603-N23 and OTKA grants K 61052 and K 67986.} \qquad
    Robert  F.\ Tichy
    \footnote{Department of Mathematics A, Graz University of Technology,
    Steyrergasse 30, A--8010 Graz, Austria,
    email: {\tt tichy@tugraz.at} Research supported by the FWF Doctoral Program on Discrete Mathematics (FWF DK W1230-N13) and FWF grant S9603-N23.}}
\end{center}

\bigskip
\begin{center}
{\it Dedicated to the memory of Walter Philipp}
\end{center}

\vskip1.5cm

{\noindent{\bf Abstract:}\ \ Let ${\cal H}=(q_1, \ldots q_r)$ be a
finite set of coprime integers and let $n_1, n_2, \ldots$ denote
the multiplicative semigroup generated by $\cal H$ and arranged in
increasing order. The distribution of such sequences has been
studied intensively in number theory and they have remarkable
probabilistic and ergodic properties. For example, the asymptotic
properties of  the sequence $\{n_kx\}$ are very similar to those
of independent, identically distributed random variables; here
$\{\cdot \}$ denotes fractional part. However, the behavior of
this sequence depends sensitively on the generating elements of
$(n_k)$ and the combination of probabilistic and number-theoretic
effects results in a unique, highly interesting asymptotic
behavior, see e.g.\ \cite{ft}, \cite{ft3}. In particular, the
properties of $\{n_kx\}$ are not permutation invariant, in
contrast to i.i.d.\ behavior. The purpose of this paper is to show
that $\{n_kx\}$ satisfies  a strong independence property
("interlaced mixing"), enabling one to determine the precise
asymptotic behavior of permuted sums $S_N (\sigma)= \sum_{k=1}^N
f(n_{\sigma(k)} x)$. As we will see, the behavior of $S_N(\sigma)$
still follows that of sums of independent random variables, but
its growth speed (depending on $\sigma$) is given by the classical
G\'al function of Diophantine approximation theory.
Some examples describing the class of possible growth functions
are given.



\vskip2cm

{\noindent\bf AMS 2000 Subject Classification:} 42A55, 11K60,
60F05, 60F15 \medskip\\
{\noindent\bf Keywords and Phrases:} Lacunary series, mixing,
central limit theorem, law of the iterated logarithm, Diophantine
equations
\newpage

\section{Introduction}

\bigskip
Let $q_1, \ldots, q_r$ be a fixed set of coprime integers and let
$(n_k)$ be the set of numbers $q_1^{\alpha_1}\cdots q_
r^{\alpha_r}$, $\alpha_i\ge 0$ integers, arranged in increasing
order. Such sequences are called (sometimes)
Hardy-Littlewood-P\'olya sequences and their distribution has been
investigated extensively in number theory. Thue \cite{thue} showed
that $n_{k+1}-n_k\to\infty$ and this result was improved gradually
until Tijdeman \cite{tij} proved that
$$n_{k+1}-n_k\ge \frac{n_k}{(\log n_k)^\alpha}$$
for some $\alpha>0$, i.e.\ the growth of $(n_k)$ is almost
exponential.  Except the value of the constant $\alpha$, this
result is best possible. Hardy-Littlewood-P\'olya sequences also
have remarkable probabilistic and ergodic properties.  In his
celebrated paper on the Khinchin conjecture, Marstrand  \cite{mar}
proved that if $f$ is a bounded measurable function with period 1,
then
$$
\lim_{N\to\infty} \frac{1}{N} \sum_{k=1}^N f(n_kx)=\int_0^1 f(t)
dt\quad \text{a.e.}
$$
and Nair \cite{na} showed (cf.\ Baker \cite{ba}) that this remains
valid if instead of boundedness of $f$ we assume only $f\in L^1(0,
1)$. Letting $\{ \cdot \}$ denote fractional part, it follows that
$\{n_kx\}$ is not only uniformly distributed mod 1 for almost all
$x$ in the sense of Weyl \cite{we}, but satisfies the  "strong
uniform distribution" property of Khinchin \cite{kh}.  Letting
$$
D_N=D_N( x_1, \ldots, x_N):= \sup_{0 \leq a < b < 1} \left|
\frac{1}{N} \#\{k\le N: a\le x_k <b\} - (b-a) \right|
$$
denote the discrepancy of a sequence $(x_k)_{1\le k \le N}$ in
$(0, 1)$, Philipp \cite{ph1994} proved, verifying a conjecture of
R.C.\ Baker, that
\begin{equation}\label{ph1994lil}
1/8\le  \limsup_{N\to\infty}  \sqrt{\frac{N}{2\log\log N}}
D_N(\{n_kx\}_{1\le k \le N}) \le C \qquad \text{a.e.},
\end{equation}
with a constant $C$ depending on the generating elements of
$(n_k)$, establishing the law of the iterated logarithm for the
discrepancies of $\{n_kx\}$. Note that if $(\xi_k)$ is a sequence
of independent random variables with uniform distribution over
$(0, 1)$, then
\begin{equation}\label{cslil}
 \limsup_{N\to\infty}  \sqrt{\frac{N}{2\log\log N}}
D_N(\xi_1, \ldots, \xi_N) =\frac{1}{2} \qquad
\end{equation}
with probability one by the Chung-Smirnov LIL (see e.g.\
\cite{sw}, p.\ 504). A comparison of (\ref{ph1994lil}) and
(\ref{cslil}) shows that the sequence $\{n_kx\}$ behaves like a
sequence of independent random variables. In the same direction,
Fukuyama and Petit \cite{fupe} showed that under mild assumptions
on the periodic function $f$, $\sum_{k\le N} f(n_kx)$ obeys the
central limit theorem, another remarkable  probabilistic property
of Hardy-Littlewood-P\'olya sequences. Surprisingly, however, the
limsup in (\ref{ph1994lil}) is different from the constant $1/2$
in (\ref{cslil}) and, as Fukuyama \cite{ft} and Fukuyama and
Nakata \cite{ft3}  showed, it depends sensitively on the
generating elements $q_1, \ldots, q_r$. For example, for
$n_k=a^k$, $a\ge 2$ the limsup $\Sigma_a$  in (\ref{ph1994lil})
equals

\begin{eqnarray}
\Sigma_a & = & \sqrt{42}/9  \phantom{999999999999} \textrm{if} ~ a =2 \nonumber\\
\Sigma_a & = & \frac{\sqrt{(a+1)a(a-2)}}{2\sqrt{(a-1)^3}} \quad
\textrm{if} ~ a \geq 4 ~
\textrm{is an even integer},\nonumber\\
\Sigma_a & = & \frac{\sqrt{a+1}}{2\sqrt{a-1}} \phantom{9999999999}
\textrm{if} ~ a \geq 3 ~ \textrm{is an odd integer} \nonumber,
\end{eqnarray}
and if all the generating elements $q_i$ of $(n_k)$ are odd, then
the limsup in (\ref{ph1994lil}) equals
$$ \frac{1}{2} \left(\prod_{i=1}^r \frac{q_i+1}{q_i-1}\right)^{1/2}.$$
Even more surprisingly, Fukuyama \cite{ft2} showed that the limsup
$\Sigma$ in (\ref{ph1994lil}) is not permutation-invariant:
changing the order of the $(n_k)$ generally changes the value of
$\Sigma$. This is quite unexpected, since $\{n_kx\}$ are
identically distributed in the sense of probability theory and the
asymptotic properties of i.i.d.\ random variables are  permutation
invariant. The purpose of this paper is to give a  detailed study
of the structure of $\{n_kx\}$ in order to explain the role of
arithmetic effects and the above surprising deviations from
i.i.d.\ behavior. Specifically, we will establish an  "interlaced"
mixing condition for normed sums of $\{n_kx\}$, expressed by
Lemmas \ref{lem:3}  and \ref{lem:5}, implying that the sequence
$\{n_kx\}$ has mixing properties after any permutation of its
terms. This property is considerably stronger than usual mixing
properties of lacunary sequences, which are always directed, i.e.\
are valid only in the "natural" order of elements.
In particular, we will see that for any permutation $\sigma:
{\mathbb N}\to {\mathbb N}$ of the positive integers, $\sum_{k\le
N} f(n_{\sigma(k)}x)$  still behaves like sums of independent
random variables and the observed pathological properties of these
sums are due to the unusual behavior of their $L^2$ norms which,
as we will see, is a purely number theoretic effect. For example,
in the case $f(x)=\{x\}$ the growth speed of the above sums is
determined by $G(n_{\sigma(1)}, \ldots, n_{\sigma(N)})$, where
\begin{equation}\label{gal}
G(m_1, \ldots, m_N)=\sum_{1\le i\le j \le N} \frac{ (m_i,
m_j)}{[m_i, m_j]}
\end{equation}
is the G\'al function in Diophantine approximation theory; here
$(a,b)$ and $[a,b]$ denote the greatest common divisor, resp.\
least common multiple of $a$ and $b$. While this function is
completely explicit, the computation of its precise asymptotics
for a specific permutation $\sigma$ is a challenging problem  and
we will illustrate the situation only by a few examples.

As noted, the basic structural information on $\{n_kx\}$ is given
by Lemmas \ref{lem:3} and \ref{lem:5}, which are rather technical.
The following result, which is a simple consequence of them,
describes the situation more explicitly.

\begin{thm}\label{th1}
Let $f:{\mathbb R}\to{\mathbb R}$ be a measurable function
satisfying the condition
\begin{equation} \label{fcond}
f(x+1)=f(x), \quad \int_0^1 f(x)~dx=0, \quad \var f < + \infty
\end{equation}
and let $\sigma:{\mathbb N}\to {\mathbb N}$ be a permutation of
$\mathbb N$. Assume that
\begin{equation}\label{anm}
A_{N, M}^2:= \int_0^1 \left(\sum_{k=M+1}^{M+N} f(n_{\sigma
(k)}x)\right)^2 dx \ge CN, \qquad N\ge N_0, \ M\ge 1
\end{equation}
for some constant $C>0$. Then letting $A_N=A_{N, 0}$ we have
\begin{equation}\label{fclt}
 A_N^{-1}\sum_{k=1}^N f(n_{\sigma (k)}x)\to_d N(0, 1)
\end{equation}
and
\begin{equation}\label{lilperm}
\limsup_{N\to \infty} \frac{1}{(2A_N^2\log\log A_N^2)^{1/2}}
\sum_{k=1}^N
f(n_{\sigma (k)} x)=1 \quad\textup{a.e.}\\
\end{equation}
\end{thm}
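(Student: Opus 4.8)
The plan is to deduce both \eqref{fclt} and \eqref{lilperm} from the interlaced mixing Lemmas \ref{lem:3} and \ref{lem:5}, which provide the quantitative decoupling needed to compare the permuted partial sums with sums of independent random variables. Since $f$ has bounded variation and mean zero, write its Fourier series $f(x)=\sum_{j\ge 1}(a_j\cos 2\pi jx + b_j \sin 2\pi jx)$ with $\sum_j j^2(a_j^2+b_j^2)<\infty$ controlled by $\var f$; in fact the bounded-variation hypothesis gives $|a_j|,|b_j|\le \var f/(\pi j)$. The first step is a truncation: split $f=f_d+r_d$ where $f_d$ is the $d$-th Fourier partial sum and $r_d$ is the tail. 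The tail contributes $O(N^{1/2} d^{-1/2} \log^{?} )$-type error in $L^2$ uniformly in the shift $M$ (using that the $n_k$ grow almost exponentially, so frequencies $j n_{\sigma(k)}$ have bounded overlap multiplicity), and by \eqref{anm} this is $o(A_N)$ once $d=d(N)\to\infty$ slowly. Thus it suffices to prove the CLT and LIL for the trigonometric-polynomial observable $f_d$, with $d\to\infty$ slowly enough that the mixing lemmas still apply on blocks.

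For the central limit theorem, the second step is a blocking argument in the spirit of the classical proof for lacunary series. Partition $\{1,\dots,N\}$ into consecutive big blocks $H_1,\dots,H_p$ separated by small blocks; choose block lengths so that (i) the small blocks are negligible in $L^2$ by \eqref{anm} applied to each small block, and (ii) Lemma \ref{lem:5} (interlaced mixing) makes the sums over the big blocks asymptotically independent, with an error tending to $0$. Because the mixing in Lemmas \ref{lem:3}/\ref{lem:5} holds for an \emph{arbitrary} permutation, the big-block sums $Y_i:=\sum_{k\in H_i} f_d(n_{\sigma(k)}x)$ behave like independent random variables; one then verifies the Lindeberg condition — here the lower bound \eqref{anm} guarantees no single block dominates, since $\operatorname{Var}(Y_i)\le C' |H_i|$ by the almost-exponential growth while $A_N^2 \ge CN$, so each block is a vanishing fraction of the total variance — and concludes $A_N^{-1}\sum_k f_d(n_{\sigma(k)}x)\to_d N(0,1)$ by the CLT for independent arrays. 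Undoing the truncation gives \eqref{fclt}.

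For the law of the iterated logarithm, the third step is to run the same blocking construction but now along a geometric subsequence $N_t$ with $N_{t+1}/N_t\to 1$ slowly (so that $A_{N_t}^2$ grows, using \eqref{anm}, at a controlled rate), and to apply the interlaced mixing to compare the array of big-block sums with a genuinely independent sequence having matching variances. One then invokes the Kolmogorov–Hartman–Wintner LIL (or the Philipp–Stout strong-approximation machinery) for that independent sequence to get the exact constant $1$ in \eqref{lilperm}; the upper bound follows from an exponential (Bernstein-type) inequality for the big-block sums — available because $f_d$ is bounded and the blocks are nearly independent — plus a standard Borel–Cantelli argument along $N_t$ with a chaining/maximal-inequality step to fill in between the $N_t$, while the lower bound uses the second Borel–Cantelli lemma on the asymptotically independent block increments together with \eqref{anm} to ensure the increments are large enough infinitely often. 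The normalization $2A_N^2\log\log A_N^2$ rather than $2A_N^2\log\log N$ is exactly what makes the constant come out to $1$, since $A_N^2$ is comparable to $N$ up to the (possibly oscillating) G\'al-function factor and $\log\log A_N^2 = \log\log N + o(1)$-type control is not needed — only $A_N^2$ enters.

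The main obstacle is the interplay between the truncation level $d=d(N)$ and the blocking: the mixing estimates in Lemmas \ref{lem:3} and \ref{lem:5} degrade with the degree $d$ of the trigonometric polynomial and with the separation between blocks, so one must choose $d(N)\to\infty$, the big-block length, and the small-block length simultaneously so that the tail error is $o(A_N)$, the small-block error is $o(A_N)$, and the total mixing error over $\sim p$ block-pairs still tends to $0$; this three-way balancing, done uniformly in the shift $M$ as required to handle increments $\sum_{k=M+1}^{M+N}$ for the LIL, is the delicate technical heart of the argument. Everything else is an adaptation of the standard CLT/LIL proof for lacunary sequences, with the crucial point that \emph{interlaced} mixing removes the usual dependence on the natural ordering and hence lets the whole scheme run for the permuted sequence $(n_{\sigma(k)})$.
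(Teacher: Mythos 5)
Your proposal has a genuine gap in the CLT step. You partition $\{1,\dots,N\}$ into big blocks $H_1,\dots,H_p$ of comparable size, invoke the interlaced mixing lemmas to declare the block sums asymptotically independent, and then apply Lindeberg. But Lemmas \ref{lem:2}, \ref{lem:3} and \ref{lem:5} only give decoupling when the two index sets have very disparate cardinalities: the error term contains $(M/N)^{1/2}$ with $M=|I|$, $N=|J|$, and the proof of Lemma \ref{lem:2} only shows $\sigma_{I,J}=\E S_IS_J\ll |I|\wedge|J|$, which for two interlaced blocks of comparable size is of the \emph{same order} as $\sigma_I\sigma_J$. So for an arbitrary permutation the cross-covariances between comparable big blocks need not be negligible, the variance of the total sum need not be the sum of the block variances, and no asymptotic independence is available for such pairs. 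You cannot simultaneously make all pairwise ratios $|H_i|/|H_j|$ small and keep every block a vanishing fraction of the total variance, so the blocking scheme is internally inconsistent with the tools at hand. The paper avoids blocking entirely for the CLT: the moment estimate of Lemma \ref{lemma2} is proved directly for the \emph{whole} sum $S_I$ over an arbitrary finite index set, and the one-dimensional version of Lemma \ref{lem:5} (see the Remark after it) already gives the normal approximation for $S_{\{1,\dots,N\}}/A_N$; the CLT is then immediate. The same objection applies to your upper-half LIL via a Bernstein inequality for ``nearly independent'' blocks; the paper instead runs the Erd\H{o}s--G\'al argument using moments of order $p=O(\log\log N)$ from Lemma \ref{lemma2}, uniformly in the shift $M$, which is the correct substitute for an exponential inequality here.

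Two secondary points. First, your truncation $f=f_d+r_d$ with $d=d(N)\to\infty$, and the ensuing ``three-way balancing'', is a self-inflicted difficulty: in the paper the truncation for the moment bound is internal to the proof of Lemma \ref{lemma2} (cut at frequency $N^{2p}$ and absorb the tail via Minkowski using $|a_j|\le j^{-1}$), while for the lower half of the LIL a \emph{fixed} truncation $f=f_1+f_2$ with $\|f_2\|\le\varepsilon$ suffices, the tail being controlled by the already-proved upper half through \eqref{fnorm} and \eqref{starstar}. Second, your lower-bound plan is close to the paper's, but note that the blocks there are $(\theta^n,\theta^{n+1}]$ with geometrically growing lengths precisely so that $|I|/|J|\to0$ and Lemma \ref{lem:5} applies; the conclusion is drawn from the generalized Borel--Cantelli lemma (Lemma \ref{renyi}) applied to the events $A_n$ directly, not from a strong approximation by a genuinely independent sequence, which would require additional justification.
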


\medskip
As the example $f(x)=\cos 2\pi x -\cos 4\pi x$, $n_k=2^k$ shows,
assumption (\ref{anm}) cannot be omitted in Theorem \ref{th1}. It
is satisfied, e.g., if all Fourier coefficients of $f$ are
nonnegative.

Theorem \ref{th1} shows that the growth speed of $\sum_{k=1}^N
f(n_{\sigma (k)}x)$ is determined by the quantity
$$A_N^2=A_n^2(\sigma)=\int_0^2 \left(\sum_{k=1}^N
f(n_{\sigma (k)}x)\right)^2 dx.
$$
In the harmonic case $f(x)=\sin 2\pi x$ we have $A_N
(\sigma)=\sqrt{N/2}$ for any $\sigma$ and thus the partial sum
behavior is permutation-invariant. For trigonometric polynomials
$f$ containing at least two terms the situation is different: for
example, in the case $f(x)=\cos 2\pi x +\cos 4\pi x$ the limits
$\lim_{N\to\infty} A_N (\sigma)/\sqrt{N}$ for all permutations
$\sigma$ fill an interval. In the case $f(x)= \{x\}-1/2$ we have,
by a well known identity of Landau (see \cite{la}, p.\ 170)
$$ \int_0^1 f(ax) f(bx) dx =\frac{1}{12}(a, b)/[a, b]$$
 Hence in this case
$$
A_N^2 =\frac{1}{12} G(n_{\sigma(1)}, \ldots, n_{\sigma(N)})
$$
where $G$ is the G\'al function defined by $(\ref{gal})$. The
function $G$ plays an important role in the metric theory of
Diophantine approximation and it is generally very difficult to
estimate; see the profound paper of G\'al \cite{ga} for more
information on this point. Clearly, $G(n_{\sigma(1)}, \ldots,
n_{\sigma(N)})\ge N$ and from the proof of Lemma 2.2 of Philipp
\cite{ph1994} it is easily seen that
$$G(n_{\sigma(1)}, \ldots, n_{\sigma(N)})\ll N.$$
Here, and in the sequel, $\ll$ means the same as the $O$ notation.
In the case of the identity permutation $\sigma$ the value of
$\lim_{N\to\infty} N^{-1} G(n_1, \ldots, n_N)$ was computed by
Fukuyama and Nakata \cite{ft3}, but to determine the precise
asumptotics  of $G(n_{\sigma(1)}, \ldots, n_{\sigma(N)})$ for
general $\sigma$ seems to be a very difficult problem. Again, in
Section 3 we will see that in the case of $n_k=2^k$ the class of
limits $\lim_{N\to\infty} N^{-1} G(n_{\sigma(1)}, \ldots,
n_{\sigma(N)})$ for all $\sigma$ fills an interval.


\bigskip\noindent
{\bf Corollary. } {\it Let $f:{\mathbb R}\to{\mathbb R}$ be a
measurable function satisfying (\ref{fcond}) and assume that the
Fourier coefficients of $f$ are nonnegative.
Let $\sigma$ be a permutation of $\mathbb N$. Then $
N^{-1/2}\sum_{k=1}^N f(n_{\sigma (k)}x)$ has a nondegenerate limit
distribution iff
\begin{equation}\label{gamma1}
\gamma^2= \lim_{N\to\infty} N^{-1}\int_0^1 \left(\sum_{k=1}^n
f(n_{\sigma (k)}x)\right)^2 dx >0
\end{equation}
exists, and then
\begin{equation}\label{fclt2}
 N^{-1/2}\sum_{k=1}^N f(n_{\sigma (k)}x)\to_d N(0, \gamma^2).
\end{equation}
Also, if condition (\ref{gamma1}) is satisfied, then
\begin{equation}\label{lilperm2}
\limsup_{N\to \infty} \frac{1}{\sqrt{2N\log\log N}} \sum_{k=1}^N
f(n_{\sigma (k)} x)=\gamma \quad\textup{a.e.}\\
\end{equation}
}

As mentioned, for the original, unpermuted sequence $(n_k)$, the
value of $\gamma=\gamma_f$ in (\ref{gamma1}) was computed in
\cite{ft3}. Given an $f$ satisfying condition (\ref{fcond}), let
$\Gamma _f$ denote the set of limiting variances in (\ref{gamma1})
belonging to all permutations $\sigma$. (Note that the limit does
not always exist.) Despite the simple description of $\Gamma_f$
above, it seems a difficult problem to determine this set
explicitly. In analogy with the theory of permuted function series
(see e.g.\ Nikishin \cite{ni}), it is natural to expect that
$\Gamma_f$ is always a (possibly degenerate) interval. In Section
3 we will prove that for $n_k=2^k$ and functions $f$ with
nonnegative Fourier coefficients, $\Gamma_f$ is identical with the
interval determined by $\|f\|^2$ and $\gamma_f^2$. For $f$ with
negative coefficients this is false, as an example in Section 3
will show.

\section{An interlaced mixing condition}

\bigskip
The crucial tool in proving Theorem 1 is a recent deep bound for
the number of solutions $(k_1, \ldots, k_p)$ of the Diophantine
equation
\begin{equation}\label{dio}
a_1 n_{k_1}+\ldots +a_p n_{k_p}=b.
\end{equation}
Call a solution of (\ref{dio}) nondegenerate if no subsum of the
sum on the left hand side equals 0. Amoroso and Viada \cite{am}
proved the following result, improving the quatitative subspace
theorem of Schmidt \cite{sch} (cf.\  also Evertse et al.\
\cite{ev}).

\begin{lem}\label{evertse} For any nonzero integers  $a_1, \ldots,
a_p, b$ the number of nondegenerate solutions of (\ref{dio}) is at
\ most $\exp(cp^6)$, where $c$ is a constant depending only on the
number of generators of $(n_k)$.
\end{lem}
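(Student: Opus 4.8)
The plan is to read the statement as a direct specialisation, with explicit constants, of the general quantitative $S$-unit equation bound of Amoroso and Viada \cite{am}; the only real work is to recast (\ref{dio}) in the right form and to control the rank of the ambient group. First I would introduce $\Gamma_0$, the multiplicative subgroup of $\mathbb{Q}^*$ generated by $q_1,\dots,q_r$. Since $\Gamma_0$ is contained in the group generated by the finitely many primes dividing $q_1\cdots q_r$, it is free abelian of some finite rank $t$, and $t$ depends only on the generators of $(n_k)$. Every term $n_k$ lies in $\Gamma_0$, so to a solution $(k_1,\dots,k_p)$ of (\ref{dio}) I would associate the point $(n_{k_1},\dots,n_{k_p})$ of the group $\Gamma:=\Gamma_0^{\,p}\subseteq(\mathbb{C}^*)^p$, which has rank $pt$. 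As $k\mapsto n_k$ is strictly increasing, hence injective, distinct solution tuples produce distinct points of $\Gamma$, so it suffices to bound the number of such points (allowing, harmlessly, all of $\Gamma$ rather than only the semigroup tuples).

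Next I would divide (\ref{dio}) by $b\neq 0$, turning it into the weighted unit equation
$$
\frac{a_1}{b}\,x_1+\dots+\frac{a_p}{b}\,x_p=1,\qquad (x_1,\dots,x_p)\in\Gamma .
$$
A subsum $\sum_{i\in I}(a_i/b)\,x_i$ vanishes exactly when $\sum_{i\in I}a_i n_{k_i}=0$, so a solution of (\ref{dio}) that is nondegenerate in the sense of the lemma corresponds precisely to a solution of this unit equation with no vanishing subsum. At this point I would quote the Amoroso--Viada bound \cite{am} (a sharpening of the Evertse--Schlickewei--Schmidt estimate \cite{ev}, which in turn rests on Schmidt's quantitative subspace theorem \cite{sch}): for an equation $c_1x_1+\dots+c_px_p=1$ whose solutions $(x_1,\dots,x_p)$ range over a subgroup of $(\mathbb{C}^*)^p$ of rank $\rho$, the number of solutions with no vanishing subsum is at most $(8p)^{4p^4(p+\rho+1)}$, \emph{uniformly in the nonzero coefficients} $c_i$. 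Applied with $\rho=pt$ this yields the bound $(8p)^{4p^4(p+pt+1)}$ for the number of nondegenerate solutions of (\ref{dio}).

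It then remains only to put this into the form $\exp(cp^6)$. Taking logarithms and using $p+pt+1\le(t+2)p$ together with $\log(8p)\le 8p$ for $p\ge 1$,
$$
\log\!\left((8p)^{4p^4(p+pt+1)}\right)=4p^4(p+pt+1)\log(8p)\le 4(t+2)p^5\log(8p)\le 32(t+2)\,p^6 ,
$$
so the number of nondegenerate solutions of (\ref{dio}) is at most $\exp(cp^6)$ with $c=32(t+2)$, which depends only on the number of generators of $(n_k)$.

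I expect the one real obstacle to lie entirely outside this bookkeeping: the genuine difficulty is absorbed into the Amoroso--Viada estimate (and ultimately into the subspace theorem), which I would use as a black box. On the level of the lemma the single point deserving care is that, after clearing the denominator $b$, all the coefficients get absorbed into the equation while the ambient group $\Gamma_0^{\,p}$ has rank at most $(\mathrm{const})\cdot p$, \emph{uniformly} in $a_1,\dots,a_p,b$; it is this uniformity, rather than the crude size of the coefficients, that makes the final bound depend on nothing but $p$ and the generators.
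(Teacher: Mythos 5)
Your proposal is correct and matches the paper's approach: the paper gives no proof at all, simply stating the lemma as the Amoroso--Viada bound, and your derivation supplies exactly the routine reduction (recasting (\ref{dio}) as a unit equation over $\Gamma_0^{\,p}$ of rank $O(p)$ and checking that the resulting bound is $\exp(O(p^6))$) that the authors leave implicit. The bookkeeping is sound, and you correctly identify that the uniformity in the coefficients $a_1,\dots,a_p,b$ is the point that matters.
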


For the rest of the paper, $C$ will denote positive constants,
possibly different at different places, depending (at most) on $f$
and $(n_k)$. Similarly, the constants implied by $O$ and by the
equivalent relation $\ll$ will depend (at most) on $f$ and
$(n_k)$.

Most results of this paper are probabilistic statements on the
sequence $\{f(n_kx), \, k=1, 2, \ldots\}$ and we will use
probabilistic terminology. The underlying probability space for
our sequence is the interval $[0, 1]$, equipped with Borel sets
and the Lebesgue measure; we will denote probability and
expectation in this space by $\mathbb P$ and $\mathbb E$.

Given any finite set $I$ of positive integers, set
$$
S_I = \sum_{k \in I} f(n_kx), \quad \sigma_I=
(\E S_I^2)^{1/2}.
$$
From Lemma 1 we deduce

\begin{lem} \label{lemma2}
Assume the conditions of Theorem 1 and let $I$ be a set of
positive integers with cardinality $N$. Then we have for any
integer $p\ge 3$
$$
\E S_I^p=
\begin{cases}
\frac{p!}{(p/2)!}2^{-p/2} \sigma_I^p+O(T_N)
 \quad &\text{if} \ p \ \text{is even} \\
O(T_N) \quad &\text{if} \ p \ \text{is odd}
\end{cases}
$$
where
\begin{equation*}
T_N=\exp (Cp^7) N^{(p-1)/2}(\log N)^p.
\end{equation*}
\end{lem}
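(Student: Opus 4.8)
The plan is to estimate $\E S_I^p$ by expanding the $p$-th power using the Fourier series of $f$ and reducing the problem to counting solutions of the Diophantine equation \eqref{dio}. Write $f(x) = \sum_{j \neq 0} c_j e^{2\pi i j x}$ (with the necessary care when $f$ is only of bounded variation, so that $c_j = O(1/|j|)$), so that
$$
\E S_I^p = \sum_{k_1,\dots,k_p \in I} \ \sum_{j_1,\dots,j_p \neq 0} c_{j_1}\cdots c_{j_p} \int_0^1 e^{2\pi i (j_1 n_{k_1} + \cdots + j_p n_{k_p})x}\, dx,
$$
and the inner integral is $1$ exactly when $j_1 n_{k_1} + \cdots + j_p n_{k_p} = 0$ and $0$ otherwise. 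So I must sum $c_{j_1}\cdots c_{j_p}$ over all $(k_1,\dots,k_p,j_1,\dots,j_p)$ giving a vanishing linear combination.

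First I would split the vanishing relations into those that are \emph{built up from nondegenerate blocks}: partition the index set $\{1,\dots,p\}$ into blocks, on each of which the corresponding subsum vanishes nondegenerately (no further vanishing sub-subsum). The dominant contribution comes from the coarsest such refinement — namely pairings of the $p$ indices into $p/2$ pairs, each pair $(k_a,k_b)$ contributing $n_{k_a} = n_{k_b}$ with $j_a = -j_b$. When $p$ is odd no pairing into pairs is possible, which is the source of the $O(T_N)$-only bound in that case. For a pairing into pairs, the main term is
$$
\sum_{\text{pairings}} \prod_{\text{pairs}} \Big(\sum_{k \in I}\sum_{j\neq 0} |c_j|^2\Big)^{?}
$$
— more precisely each pair contributes $\sum_{k\in I}\sum_{j} c_j c_{-j} = \sum_{k \in I} \|f\|_2^2$ up to the distinction between "diagonal" pairs ($k_a = k_b$) and the genuine identity $n_{k_a}=n_{k_b}$ with $k_a\neq k_b$, but since the $n_k$ are distinct the latter cannot occur, so each pair gives exactly $\sigma_I^2$ after noting $\sum_{k\in I}\|f\|_2^2$ must be replaced by the covariance sum $\sum_{k,\ell}\E f(n_kx)f(n_\ell x) = \sigma_I^2$ once cross terms within a block are accounted for. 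The number of pairings of $p$ objects into unordered pairs is $\frac{p!}{(p/2)!\,2^{p/2}}$, which produces exactly the claimed constant $\frac{p!}{(p/2)!}2^{-p/2}$, and the product over the $p/2$ pairs gives $\sigma_I^p$.

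The error term $T_N$ comes from all other block partitions — those with at least one block of size $\geq 3$ (nondegenerate), plus the truncation error from replacing $f$ by a Fourier partial sum. For a block of size $q \geq 3$, Lemma \ref{evertse} bounds the number of nondegenerate solutions $(k_{i_1},\dots,k_{i_q})$ of $j_{i_1}n_{k_{i_1}} + \cdots + j_{i_q}n_{k_{i_q}} = 0$, for \emph{fixed} coefficients $j$, by $\exp(cq^6)$; summing over the $j$'s (using $\sum_j |c_j| \ll \log J$ up to a truncation level $J \asymp N^{1/2}$ or a power of $N$, thanks to bounded variation) and over the choice of which $q$ indices form the block costs further polynomial-in-$N$ and factorial-in-$p$ factors, and one free index $k$ is "cheap" only up to a factor $N$, leading to an overall saving of at least $N^{-1/2}$ per oversized block relative to the main term, times powers of $\log N$ from the Fourier tails and an $\exp(Cp^7)$ from combining $\exp(cq^6)$ with the $\binom{p}{q}$-type combinatorial counts (summing $q^6$ over a partition of $p$ can reach order $p^7$). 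Collecting everything gives $T_N = \exp(Cp^7) N^{(p-1)/2}(\log N)^p$.

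The main obstacle, and the step I would spend the most care on, is the bookkeeping of the block partitions: showing that \emph{every} partition other than a perfect pairing into pairs is absorbed into $O(T_N)$, uniformly in $p$, with the exponent of $N$ exactly $(p-1)/2$ and the exponent of $\log N$ exactly $p$. This requires (i) handling the bounded-variation case, where $f$ is replaced by a trigonometric polynomial of degree $\asymp N^{1/2}$ with $L^2$-error controlled via $\var f < \infty$, so that the discarded high-frequency part contributes $\ll N^{(p-1)/2}$ after crude $L^\infty/L^2$ estimates; (ii) for each block of size $q\ge 3$, carefully matching the loss $\exp(cq^6)$ from Lemma \ref{evertse} and the $\sum_j|c_j| \ll \log N$ per index against one gained factor $N^{1/2}$; and (iii) verifying that degenerate solutions within a would-be pair (i.e.\ sub-sums forcing $j_a n_{k_a} = 0$) are impossible since $j_a, n_{k_a} \neq 0$, so the only vanishing pairs are the genuine $j_a = -j_b$, $k_a=k_b$ ones. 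Once the partition-by-partition estimate is in place, the statement follows by summation.
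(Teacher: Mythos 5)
Your overall architecture is exactly the paper's: expand $\E S_I^p$ via the Fourier series, reduce to counting solutions of \eqref{dio}, partition $\{1,\dots,p\}$ into nondegenerate blocks, take the main term from perfect pairings (giving the constant $\frac{p!}{(p/2)!}2^{-p/2}$), and absorb every partition containing a block of size $\ge 3$ into $O(T_N)$ using Lemma \ref{evertse} with one ``free'' index costing a factor $N$. However, your point (iii) contains a genuine error that contradicts your own main-term computation. You assert that ``the only vanishing pairs are the genuine $j_a=-j_b$, $k_a=k_b$ ones.'' This is false: a pair with $k_a\neq k_b$ contributes whenever $j_a n_{k_a}=j_b n_{k_b}$, which happens for many $(j_a,j_b)$ with $j_a\neq j_b$ since the $n_k$ lie in a multiplicative semigroup (e.g.\ $n_{k_a}=2$, $n_{k_b}=4$, $j_a=2$, $j_b=1$). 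These off-diagonal pairs are precisely why $\sigma_I^2$ differs from $N\|f\|^2$ and why the G\'al function enters the paper at all; if (iii) were true the main term would be $\frac{p!}{(p/2)!}2^{-p/2}(N\|f\|^2)^{p/2}$ rather than $\frac{p!}{(p/2)!}2^{-p/2}\sigma_I^p$. Your middle paragraph does correctly say that each pair block sums to the full covariance $\E S_I^2=\sigma_I^2$ (the paper implements this by recognizing the product of pair-sums as $\|\sum_{k\in I}g(n_kx)\|^p=(\sigma_I+O(1))^p$), so you should delete (iii) and keep only that version.

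A second, fixable calibration issue: truncating the Fourier series at $J\asymp N^{1/2}$ is not enough. The crude route you describe (triangle inequality plus $L^\infty$/$L^2$ interpolation on the tail $r=f-g$) gives $\|\sum_{k\in I}r(n_kx)\|_p\ll N\cdot\|r\|^{2/p}\ll N\cdot J^{-1/p}$, and for this to be $O(1)$ uniformly in $p$ you need $J$ of order $N^{cp}$; the paper takes $J=N^{2p}$. With $J=N^{1/2}$ the tail term is nearly of size $N$ and destroys the error bound. The resulting $\sum_{j\le J}|a_j|\ll p\log N$ per index is still fine, since $(Cp\log N)^p$ is absorbed into $\exp(Cp^7)(\log N)^p$. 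With these two repairs your argument is the paper's proof.
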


\noindent {\bf Proof.} Let $C_p=\exp (cp^6)$ be the constant in
Lemma \ref{evertse}. We first note that
\begin{equation}\label{starstar}
\sigma_I\le K\|f\|^{1/2} |I|^{1/2},
\end{equation}
where $K$ is a constant depending  only on the generating elements
of $(n_k)$. This relation is implicit in the proof of Lemma 2.2 of
Philipp \cite{ph1994}. Next we observe that for any fixed $p\ge 3$
and any fixed nonzero coefficients $a_1, \ldots, a_p$, the number
of nondegenerate solutions of (\ref{dio}) such that $b=0$ and
$k_1, \ldots, k_p \in I$ is at most $C_{p-1}N$. Indeed, the number
of choices for $k_p$ is at most $N$, and thus taking $a_pn_{k_p}$
to the right hand side and applying Lemma 1, our claim follows.

Without loss of generality we may assume that $f$ is an even
function and that $\|f\|_\infty \le 1$, $\var f \leq 1$; the proof
in the general case is  the same. (Here, and in the sequel, $\|
\cdot \|_p$ denotes the $L_p$ norm; for $p=2$ we simply write
$\|\cdot \|$.) Let
\begin{equation*}
f \sim \sum_{j=1}^\infty a_j \cos 2 \pi j x
\end{equation*}
be the Fourier series of $f$. $\var f \leq 1$ implies (see Zygmund
\cite[p.\ 48]{zt})
\begin{equation} \label{fouriercoeffs}
|a_j| \leq j^{-1},
\end{equation}
and, writing
$$
g(x) = \sum_{j=1}^{N^{2p}} a_j \cos 2 \pi j x, \qquad r(x) = f(x)
- g(x),
$$
we have
$$
\|g\|_\infty \leq \var f + \|f\|_\infty \leq 2, \qquad
\|r\|_\infty \leq  \|f\|_\infty + \|g\|_\infty \leq 3.
$$
For any positive integer $n$, (\ref{fouriercoeffs}) yields
$$
\|(r(n x)\|^2 = \|r(x)\|^2 = \frac{1}{2} \sum_{j=N^{2p}+1}^\infty
a_j^2 \leq N^{-2p}.
$$
By Minkowski's inequality,
\begin{equation*}\label{min1}
\|S_I\|_p \leq \left\| \sum_{k\in I} g(n_k x) \right\|_p + \left\|
\sum_{k\in I} r(n_k x) \right\|_p,
\end{equation*}
and
\begin{equation}\label{min2}
\left\| \sum_{k\in I} r(n_k x) \right\|_p  \leq 3\sum_{k\in I}
\left\| r(n_k x)/3 \right\|_p  \leq 3 \sum_{k\in I} \left\| r(n_k
x)/3 \right\|^{2/p} \leq 3 \sum_{k\in I} N^{-2} \leq 3.
\end{equation}
By expanding and using elementary properties of the trigonometric
functions we get
\begin{align}\label{esnp}
&\E\left(\sum_{k\in I} g(n_kx)\right)^p\nonumber\\
&= 2^{-p} \sum_{1\le j_1, \ldots, j_p \le N^{2p}} a_{j_1}\cdots
a_{j_p}\sum_{k_1, \ldots, k_p\in I} ~\mathds{I} \{\pm j_1 n_{k_1}
\pm \ldots \pm j_p n_{k_p}=0\},
\end{align}
with all possibilities of the signs $\pm$ within the indicator
function. Assume that $j_1, \dots, j_p$ and the signs $\pm$ are
fixed, and consider a solution of  $\pm j_1 n_{k_1} \pm \ldots \pm
j_p n_{k_p}=0$. Then the set $\{1, 2, \ldots, p\}$ can be split
into disjoint sets $A_1, \ldots, A_l$ such that for each such set
$A$ we have $\sum_{i\in A} \pm j_i n_{k_i}=0$ and no further
subsums of these sums are equal to 0. By the monotonicity of $C_p$
and the remark at the beginning of the proof, for each $A$ with
$|A|\ge 3$ the number of solutions is $\le C_{|A|-1}N\le
C_{p-1}N$;
trivially, for $|A|=2$ the number of solutions is at most $N$.
Thus if $s_i=|A_i|$ \ ($1\le i \le p$) denotes the cardinality of
$A_i$, the number of solutions of $\pm j_1 n_{k_1} \pm \ldots \pm
j_p n_{k_p}=0$ admitting such a decomposition with fixed $A_1,
\ldots, A_l$ is at most
\begin{align*}
&\prod_{\{i: s_i\ge 3\}} C_{p-1}N  \prod_{\{i: s_i=2\}} N \le
(C_{p-1} N)^{\sum_{\{i: s_i\ge 3\}}1+ {\sum_{\{i: s_i=2\}}1}} \\
&\le (C_{p-1} N)^{\frac{1}{3}\sum_{\{i: s_i\ge 3\}}s_i+
\frac{1}{2}{\sum_{\{i: s_i=2\}}s_i}}
= (C_{p-1}N)^{\frac{1}{3}\sum_{\{i: s_i\ge 3\}}s_i +\frac{1}{2}(p-
\sum_{\{i: s_i\ge
3\}}s_i)}\\
&=(C_{p-1}N)^{\frac{p}{2}-\frac{1}{6}\sum_{\{i: s_i\ge 3\}}s_i}.
\end{align*}
If there is at least one $i$ with $s_i\ge 3$, then the last
exponent is at most $(p-1)/2$ and since the number of partitions
of the set $\{1, \ldots, p\}$ into disjoint subsets is at most
$p!\, 2^p$, we see that the number of solutions of $\pm j_1
n_{k_1} \pm \ldots \pm j_p n_{k_p}=0$ where at least one of the
sets $A_i$ has cardinality $\ge 3$ is at most $p!\,
2^p(C_{p-1}N)^{(p-1)/2}$. If $p$ is odd, there are no other
solutions and thus using (\ref{fouriercoeffs}) the inner sum in
(\ref{esnp}) is at most $p!\, 2^p(C_{p-1}N)^{(p-1)/2}$ and
consequently, taking into account the $2^p$ choices for the signs
$\pm 1$,
\begin{align*}\label{esnpodd}
&\left|\E\left(\sum_{k\in I} g(n_kx)\right)^p\right| \nonumber\\
&\le p!\, 2^p(C_{p-1}N)^{(p-1)/2} 2^p \sum_{1\le j_1, \ldots, j_p
\le N^{2p}} |a_{j_1}\cdots a_{j_p}| \ll \exp(Cp^7) N^{(p-1)/2}
(\log N)^p.
\end{align*}
If $p$ is even, there are also solutions where each $A$ has
cardinality 2. Clearly, the contribution of the terms in
(\ref{esnp}) where $A_1=\{1, 2\}, A_2=\{3, 4\}, \ldots$ is
\begin{align}
&\left( \frac{1}{4}\sum_{1\le i,j \le N^{2p}} \sum_{k, \ell \in I}
a_i a_j \mathds{I}\{ \pm i n_k \pm j n_\ell=0\}\right)^{p/2}  =
\left( \E \left(\sum_{k\in I} g(n_k x) \right)^2 \right)^{p/2} \nonumber \\
&=\left\| \sum_{k\in I} g(n_kx)\right\|^{p}= \left\|S_I-
\sum_{k\in I} r(n_kx)\right\|^{p}=\left( \sigma_I +O(1)
\right)^{p} \nonumber \\
&=
\sigma_I^{p}+p \left(\sigma_I +O(1)\right)^{p-1}O(1)\nonumber \\
&=\sigma_I^{p}
+O\left(p 2^{p-2}\right) \left( \sigma_I^{p-1}+O(1)^{p-1}\right)\nonumber\\
&= \sigma_I^{p} + O(2^{p^2}) N^{(p-1)/2},
\end{align}
using the mean value theorem and the relation
\begin{equation}\label{conv}
\left(\sum_{j=1}^m x_j\right)^\alpha\le \max(1, m^{\alpha-1})
\sum_{j=1}^m x_j^\alpha, \qquad (\alpha>0, \ x_i\ge 0).
\end{equation}
Here the constants implied by the $O$ are absolute. Since the
splitting of $\{1, 2, \ldots, p\}$ into pairs can be done in
$\frac{p!}{(p/2)!} 2^{-p/2}$ different ways, we proved that
\begin{equation}\label{egnp}
\E \left(\sum_{k\in I} g(n_kx)\right)^p=
\begin{cases}
\frac{p!}{(p/2)!}2^{-\frac{p}{2}} \sigma_I^p
+O(T_N^*) \\
O (T_N^*)
\end{cases}
\end{equation}
according as $p$ is even or odd; here
$$
T_N^*= \exp (Cp^7) N^{(p-1)/2}(\log N)^p.
$$
Now, letting $G_I=\sum_{k\in I} g(n_kx)$, $R_I=\sum_{k\in I}
r(n_kx)$, we get, using the mean value theorem, H\"older's
inequality and (\ref{conv}),
\begin{align}\label{sngn}
&|\E S_I^p-\E G_I^p| \\
&\le \E|(G_I+R_I)^p-G_I^p|=\E |pR_I (G_I+\theta R_I)^{p-1}| \nonumber\\
&\le p\|R_I\|_p \|(G_I+\theta R_I)^{p-1}\|_{p/(p-1)}=p\|R_I\|_p
\|G_I+\theta R_I\|_p^{p-1}\nonumber \\
&\le 3p (\|G_I\|_p+3)^{p-1}\le 3p2^{p-2}(\|G_I\|_p^{p-1}+3^{p-1}),
\nonumber
\end{align}
for some $0 \le \theta=\theta (x) \le 1$. For even $p$ we get from
(\ref{egnp}), together with (\ref{conv}) with $\alpha=1/p$, that
\begin{equation*}\label{gn}
\|G_I\|_p\ll p\sigma_I + \exp (Cp^6) \sqrt{N} \log N.
\end{equation*}
For $p$ odd, we get the same bound, since $\|G_I\|_p \le
\|G_I\|_{p+1}$. Thus for any $p\ge 3$ we get from (\ref{sngn})
\begin{align*}
\E S_I^p &\le \E G_I^p+3p2^{p-2}(\|G_I\|_p^{p-1}+3^{p-1})\\
&\le \E G_I^p+3p2^{p-2} \left[  (p\sigma_I \exp
(Cp^6))^{p-1}+3^{p-1}\right]
O(1)^{p-1}\\
&\le EG_I^p + \exp (Cp^7) \left( C\sqrt{N}\right)^{p-1},
\end{align*}
completing the proof of Lemma \ref{lemma2}.

\begin{lem}\label{lem:2}  Let
$$f=\sum_{k=1}^d (a_k \cos 2\pi kx+b_k \sin 2 \pi kx)$$
be a trigonometric polynomial and let  $I$, $J$ be disjoint sets
of of positive integers with cardinality $M$ and $N$,
respectively, where $M/N\le C$ with a sufficiently small constant
$0<C<1$. Assume $\sigma_I\gg |I|^{1/2}$, $\sigma_J\gg |J|^{1/2}$.
Then for any integers $p\ge 2$, $q\ge 2$ we have
\begin{align}
\label{eq:1} &\E (S_I/\sigma_I)^p
(S_J/\sigma_J)^q =\\
&= \begin{cases} \dfrac{p!}{(p/2)!  2^{p/2}} \dfrac{q!}{(q/2)!
2^{q/2}} + O (T_{M,N})
&\text{if }\ p, q \text{ are even}\\
O(T_{M,N}) &\text{otherwise}
\end{cases}
\nonumber
\end{align}
where
$$
T_{M,N} = C^{p + q}_{p + q} \bigl(M^{-1/2} + (M/N)^{1/2}\bigr),
$$
and $C_p=\exp (cp^6)$ is the constant in Lemma \ref{evertse}.
\end{lem}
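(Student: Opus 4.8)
The plan is to imitate the proof of Lemma~\ref{lemma2}, now for the product $S_I^pS_J^q$, with the simplification that $f$ is already a trigonometric polynomial so that no Fourier truncation is needed. Write $f(x)=\sum_{0<|j|\le d}c_je(jx)$ with $e(t)=e^{2\pi it}$, $c_{-j}=\overline{c_j}$, $|c_j|\le C$. From \eqref{starstar} and the hypotheses $\sigma_I\gg|I|^{1/2}$, $\sigma_J\gg|J|^{1/2}$ one has $\sigma_I\asymp M^{1/2}$ and $\sigma_J\asymp N^{1/2}$, so it suffices to prove
$$\E S_I^pS_J^q=\frac{p!}{(p/2)!\,2^{p/2}}\,\frac{q!}{(q/2)!\,2^{q/2}}\,\sigma_I^p\sigma_J^q+O\!\left(C_{p+q}^{p+q}\bigl(M^{-1/2}+(M/N)^{1/2}\bigr)M^{p/2}N^{q/2}\right)$$
for $p,q$ even, and that $\E S_I^pS_J^q$ is of the same order as the error term when $p\not\equiv q\pmod2$ or when $p,q$ are both odd. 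Expanding the product and integrating over $[0,1]$ reduces the left side to a Fourier-weighted count of the solutions of
$$j_1n_{k_1}+\dots+j_pn_{k_p}+j_1'n_{\ell_1}+\dots+j_q'n_{\ell_q}=0,\qquad k_i\in I,\ \ell_i\in J,\ 0<|j_i|,|j_i'|\le d;$$
as in Lemma~\ref{lemma2} I would split $\{1,\dots,p+q\}$ (the first $p$ labels attached to $I$, the last $q$ to $J$) into blocks on which the corresponding subsum vanishes with no proper subsum vanishing, calling a block \emph{pure-$I$}, \emph{pure-$J$} or \emph{mixed} according to which of the two label sets it meets.

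Next I would record the per-block counts (for fixed signs and $j$'s): a pure-$I$ pair or a mixed pair is $\ll M$ — fix one of its $I$-variables, and since the two $jn$-values must agree the other variable is then determined in $\ll1$ ways — whereas a pure-$J$ pair is $\ll N$; a block of size $s\ge3$ containing an $I$-label is $\ll M\,C_{s-1}$, obtained by fixing that $I$-variable ($\le M$ choices), moving its term to the right-hand side, and invoking Lemma~\ref{evertse} (legitimate, since nondegeneracy of the block forces the new right-hand side to be nonzero and the surviving solutions to be nondegenerate), and an all-$J$ block of size $s\ge3$ is $\ll N\,C_{s-1}$ by the same argument applied to a $J$-variable. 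The number of block partitions, sign patterns and $j$-vectors is at most $(p+q)!\,2^{p+q}(2d)^{p+q}$, which, together with $|c_j|\le C$, is absorbed into $C_{p+q}^{p+q}$.

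With these counts in hand the case analysis is routine. For a partition containing a block of size $\ge3$, let $T$ be the family of such blocks, of which $\tau_I$ contain an $I$-label and $\tau_J$ are all-$J$; write $a_t,b_t$ for the numbers of $I$- and $J$-labels in block $t$ and let $\mu$ be the number of mixed pairs. Multiplying the per-block bounds and comparing with $M^{p/2}N^{q/2}$ gives a contribution $\ll C_{p+q}^{p+q}M^aN^bM^{p/2}N^{q/2}$ with $a=\mu/2+\tau_I-\tfrac12\sum_{t\in T}a_t$ and $b=\tau_J-\mu/2-\tfrac12\sum_{t\in T}b_t$. Since each $s_t\ge3$ one checks $b\le0$ and $a+b=\tau_I+\tau_J-\tfrac12\sum_{t\in T}s_t\le-\tfrac12|T|\le-\tfrac12$, whence, using $1\le M\le N$, either $a\ge\tfrac12$ and $M^aN^b=(M/N)^{1/2}N^{a+b}(M/N)^{a-1/2}\le(M/N)^{1/2}$, or $a\le\tfrac12$ and $M^aN^b\le M^{a+b}\le M^{-1/2}$; so all such partitions lie within the stated error. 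An all-pair partition has $\mu$ mixed pairs, $(p-\mu)/2$ pure-$I$ and $(q-\mu)/2$ pure-$J$ pairs, forcing $\mu\equiv p\equiv q\pmod2$: if $p\not\equiv q$ there are none and we are finished; if $p\equiv q$ is odd then $\mu\ge1$ and the partition is $\ll C_{p+q}^{p+q}(M/N)^{\mu/2}M^{p/2}N^{q/2}\le C_{p+q}^{p+q}(M/N)^{1/2}M^{p/2}N^{q/2}$; if $p\equiv q$ is even then the $\mu\ge2$ terms give $\ll C_{p+q}^{p+q}(M/N)M^{p/2}N^{q/2}$ and only $\mu=0$ survives. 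For $\mu=0$ (so $p,q$ even) I would, for each pairing $\pi$ of the $I$-labels into $p/2$ pairs and the $J$-labels into $q/2$ pairs, let $\mathrm{Contrib}(\pi)$ be the weighted count over tuples for which $\pi$ is a valid block partition; the sum factorizes over the pairs, a pure-$I$ pair contributing $\sum_{k,\ell\in I}\E f(n_kx)f(n_\ell x)=\E S_I^2=\sigma_I^2$ and a pure-$J$ pair $\sigma_J^2$, so $\mathrm{Contrib}(\pi)=\sigma_I^p\sigma_J^q$ for each of the $\frac{p!}{(p/2)!2^{p/2}}\frac{q!}{(q/2)!2^{q/2}}$ pairings. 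A tuple counted by two distinct $\pi$'s must, since two perfect matchings differ along alternating cycles, contain four $I$-labels (or four $J$-labels) with equal $jn$-values, so the over-count is $\ll C_{p+q}^{p+q}M^{-1}M^{p/2}N^{q/2}$; collecting everything and dividing by $\sigma_I^p\sigma_J^q\asymp M^{p/2}N^{q/2}$, with $M/N<1$ so that $M/N\le(M/N)^{1/2}$ and $M^{-1}\le M^{-1/2}$, yields the claim with $T_{M,N}=C_{p+q}^{p+q}(M^{-1/2}+(M/N)^{1/2})$.

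The main obstacle will be the exponent bookkeeping once two index sets of different cardinalities are present — in particular making sure that each block of size $\ge3$ is charged to the \emph{smaller} set $I$ (this is what produces the $M^{-1/2}$ rather than merely an $N^{-1/2}$), and that mixed pairs receive the same treatment — together with the control of the over-counting in the diagonal $\mu=0$ term; as in Lemma~\ref{lemma2} it is exactly the subspace-type estimate of Lemma~\ref{evertse} that eliminates the large ``resonant'' blocks, but its use here must be coordinated with the constraint $M\le N$.
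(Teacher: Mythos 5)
Your proposal follows essentially the same route as the paper's proof: the same Fourier expansion of $\E S_I^pS_J^q$ into a weighted count of solutions of $\pm j_1n_{k_1}\pm\dots\pm j_{p+q}n_{k_{p+q}}=0$, the same decomposition into minimal vanishing blocks with per-block counts $\ll C_{p+q-1}M$ for blocks meeting the $I$-labels and $\ll C_{p+q-1}N$ for all-$J$ blocks via Lemma \ref{evertse}, the same $M^{-1/2}$ saving whenever some block has size $\ge 3$, and the same $(M/N)^{s/2}$ bound for $s\ge 1$ mixed pairs that the paper obtains from $\sigma_{I,J}\ll M$. Your explicit inclusion--exclusion correction for tuples admitting two distinct pure pairings in the $\mu=0$ main term is a point the paper passes over silently, but it refines rather than alters the argument.
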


\begin{proof}
To simplify the formulas, we assume again that $f$ is a cosine
polynomial, i.e.\
$$
f(x) = \sum^{d}_{j = 1} a_j \cos 2\pi j x.
$$
The general case requires only trivial changes. Clearly
\begin{align*}
&S_I^p S_J^q =\frac1{2^{p + q}} \sum_{1\leq j_1, \dots, j_{p + q}
\leq d}
a_{j_1} \dots a_{j_{p + q}} \times\\
&\quad \times \sum_{\substack{k_1, \dots, k_p \in I\\
k_{p + 1}, \dots, k_{p + q} \in J}}\kern-12pt \cos 2\pi \bigl(\pm
j_1 n_{k_1} \pm \dots \pm j_{p + q} n_{k_{p + q}}\bigr)x
\end{align*}
and thus
\begin{align}
&\E S_I^p S_J^q=
\label{eq:2}\\
&= \frac1{2^{p + q }} \sum_{1 \leq j_1, \dots, j_{p + q} \leq d}
a_{j_1}\ldots a_{j_{p+q}}
\sum_{\substack{k_1, \dots, k_p \in I\\
k_{p + 1}, \dots, k_{p + q} \in J}} I\bigl\{\pm j_1 n_{k_1} \pm
\dots \pm j_{p + q} n_{k_{p + q}} = 0 \bigr\}. \nonumber
\end{align}
Assume that $j_1, \dots, j_{p + q}$ and the signs $\pm$ are fixed
and consider a solution of
\begin{equation}
\label{eq:3} \pm j_1 n_{k_1} \pm \dots \pm j_{p + q} n_{k_{p + q}}
= 0.
\end{equation}
Clearly, the set $\{1, 2, \dots, p + q\}$ can be split into
disjoint sets $A_1, \dots, A_\ell$ such that for each  such set
$A$ we have $\sum\limits_{i \in A} \pm j_i n_{k_i} = 0$ and no
further subsums of these sums are equal to~$0$. Call a set $A$
type $1$ or type $2$ according as $A$ intersects $\{1, 2, \ldots,
p \}$ or $A \subseteq \{p+1,\dots, p+q\}$.  Similarly as in the
proof of Lemma \ref{lemma2}, the number of solutions of the
equation $\sum_{i\in A} \pm j_i n_{k_i}=0$ is at most $C_{p + q-1}
M$ or $C_{p + q-1} N$ according as $A$ is of type $1$ or type~$2$.
Thus the number of solutions of \eqref{eq:3} belonging to a fixed
decomposition $\{A_1, \dots, A_\ell\}$ is at most
\begin{equation}
\label{eq:4} (C_{p + q-1} M)^R (C_{p + q-1} N)^S
\end{equation}
where $R$ and $S$ denote, respectively, the number of $A_i$'s with
type~$1$ and type~$2$. Let $R^*$ and $S^*$ denote the total
cardinality of sets of type~$1$ and type~$2$. Then $R = R^*/2$ or
$R \leq (R^* - 1)/2$ according as all sets of type~$1$ have
cardinality~$2$ or at least one of them has cardinality $\geq 3$.
A~similar statement holds for sets of type~$2$ and thus if there
exists at least one set $A_i$ with $|A_i| \geq 3$, the expression
in \eqref{eq:4} can be estimated as follows, using also $R^* + S^*
= p + q$, $S^* \leq q$,
\begin{align*}
(C_{p + q-1} M)^R (C_{p + q-1} N)^S & \leq (C_{p + q-1} M)^{R^*/2}
(C_{p + q-1} N)^{S^*/2}(C_{p+q-1} M)^{-1/2} \\
&= (C_{p + q-1} M)^{(p + q - S^*)/2} (C_{p + q-1} N)^{S^*/2}
(C_{p + q-1} M)^{-1/2} \\
&= (C_{p + q-1} M)^{(p + q)/2} (N/M)^{S^*/2} (C_{p + q-1} M)^{-1/2} \\
&\leq (C_{p + q-1} M)^{(p + q)/2} (N/M)^{q/2} (C_{p + q-1} M)^{-1/2} \\
&\leq C^{(p + q)/2}_{p + q} M^{p/2} N^{q/2} M^{-1/2} \\
&\ll C^{p+q} C^{(p + q)/2}_{p + q} \sigma_I^p \sigma_J^q M^{-1/2},
\end{align*}
where in the last step we used (\ref{anm}). Since the total number
of decompositions of the set $\{1, 2, \dots, p + q\}$ into subsets
is $\le (p + q)! 2^{p + q} \ll 2^{(p + q)^2}$, it follows that the
contribution of those solutions of \eqref{eq:3} in \eqref{eq:2}
where $|A_i| \geq 3$ for at least one set $A_i$ is
$$
\ll 2^{(p + q)^2} (\log d)^{p+q} C^{(p + q)/2}_{p + q}
M^{-1/2}\sigma_I^p \sigma_J^q.
$$

We now turn to the contribution of those solutions of \eqref{eq:3}
where all sets $A_1, \dots, A_\ell$ have cardinality~$2$. This can
happen only if $p + q$ is even and then $\ell = (p + q)/2$. Fixing
$A_1, \ldots A_\ell$, the sum of the corresponding terms in
\eqref{eq:2} can be written as
$$
2^{-(p+q)}\sum_{1 \leq j_1, \dots, j_{p + q}  \leq d}\kern-7pt
a_{j_1} \dots a_{j_{p + q}} I \biggl\{ \sum_{i \in A_1} \pm j_i
n_{k_i} = 0 \biggr\} \dots I \biggl\{ \sum_{i \in A_{(p +
q)/2}}\kern-4pt \pm j_i n_{k_i} = 0 \biggr\}
$$
and this is the product of $(p + q)/2$ such sums belonging to
$A_1, \dots, A_{(p + q)/2}$. For an $A_i \subseteq \{1, \dots,
p\}$ we get
$$
\frac{1}{4}\sum_{\substack{1 \leq i,j \leq d\\
k_i, k_j \in I}} a_i a_j I \bigl\{ \pm in_{k_i} \pm jn_{k_j} =
0\bigr\} = ES_I^2 = \sigma^2_I.
$$
Similarly, for any $A_i \subseteq \{p + 1, \dots, p + q\}$ the
corresponding sum equals $ES_J^2 = \sigma^2_J$. Finally, if a set
$A_i$ is ``mixed'', i.e.\ if one of its elements is in $\{1,
\dots, p\}$, the other in $\{p + 1, \dots, p + q\}$, then we get
$ES_IS_J := \sigma_{I,J}$ (cf.\ (\ref{eq:2}) with $p=q=1$). Thus,
if we have $t_1$ sets $A_i \subseteq \{1, \dots, p\}$, $t_2$ sets
$A_i \subseteq \{ p + 1, \dots, p + q\}$ and $t_3$ ``mixed'' sets,
we get $\sigma^{2t_1}_I \sigma^{2t_2}_J \sigma^{t_3}_{I,J}$.
Clearly $t_3 = 0$ can occur only if $p$ and $q$ are both even and
then $t_1 = p/2$, $t_2 = q/2$, i.e.\ we get $\sigma^p_I
\sigma^q_J$ which, taking into account the fact that $\{1, 2,
\ldots, p\}$ can be split into 2-element subsets in
$\frac{p!}{(p/2)!}2^{-p/2}$ different ways, gives the contribution
$$
\frac{p!}{(p/2)! 2^{p/2}} \frac{q!}{(q/2)! 2^{q/2}} \sigma^p_I
\sigma^q_J.
$$
Assume now that $t_3 = s$, $1 \leq s \leq p \land q$. Then $t_1 =
(p - s)/2$, $t_2 = (q - s)/2$; clearly if $p$ and  $q$ are both
even, then $s$ can be $0, 2,4,\dots$ and if $p$ and $q$ are both
odd, then $s$ can be $1,3,5,\dots$. Thus the contribution in this
case is
\begin{equation}
\label{eq:5} \sigma^{p - s}_I \sigma^{q - s}_J \sigma^s_{I,J}.
\end{equation}
From
$$
\sigma_{I,J} = \frac{1}{4} \sum_{\substack{1 \leq i, j \leq d\\
k \in I, \ell \in J}} a_i a_j I \bigl\{ \pm in_k \pm jn_\ell =
0\bigr\}
$$
we see that $\sigma_{I, J} \ll (|I| \land |J|) = M$ and thus
dividing with $\sigma^p_I \sigma^q_J$ and summing for $s$,
\eqref{eq:5} yields, using again (\ref{anm}),
\begin{align*}
\sum_{s \geq 1} \sigma^{-s}_I \sigma^{-s}_J \sigma^s_{I,J} &\le
\sum_{s \geq 1} C^{2s}(MN)^{-s/2} M^s  =\\
&= \sum_{s \geq 1} C^{2s}(M/N)^{s/2} \ll (M/N)^{1/2},
\end{align*}
provided $C$ is small enough.
\end{proof}

\begin{lem}
\label{lem:3} Under the conditions of Lemma \ref{lem:2} we have
for any $0 < \delta < 1$
\begin{align*}
&\left| \E\left(\exp\left( it S_I/\sigma_I + is S_J/\sigma_J
\right)\right) -
e^{-(t^2 + s^2)/2} \right| \ll \\
&\ll e^{-C(\log M)^\delta} + e^{C(\log M)^{7\delta}} \bigl(
M^{-1/2} + \sqrt{M/N}\bigr)
\end{align*}
for $|t|, |s| \leq \frac{1}{4} (\log M)^{\delta/2}$.
\end{lem}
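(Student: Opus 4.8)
The strategy is the standard \emph{moment method}: control the joint moments $\E(S_I/\sigma_I)^p(S_J/\sigma_J)^q$ via Lemma \ref{lem:2}, then pass to the characteristic function by summing the Taylor expansions of $e^{itu}$ and $e^{isv}$, choosing the truncation point in the sum as a function of $M$ and $N$. First I would expand
$$
\E\exp\!\bigl(itS_I/\sigma_I+isS_J/\sigma_J\bigr)
=\sum_{p,q\ge 0}\frac{(it)^p}{p!}\frac{(is)^q}{q!}\,
\E\bigl((S_I/\sigma_I)^p(S_J/\sigma_J)^q\bigr),
$$
split the double sum at some level $P=P(M)$ (to be chosen of order $(\log M)^{\delta}$, slightly larger than the range of $t,s$), and treat the ``main'' part $p,q\le P$ and the ``tail'' part separately.

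For the main part I would substitute the asymptotic formula from Lemma \ref{lem:2}. The leading terms assemble, for $p,q$ even, into
$$
\sum_{p\ \mathrm{even}}\frac{(it)^p}{p!}\cdot\frac{p!}{(p/2)!2^{p/2}}
\sum_{q\ \mathrm{even}}\frac{(is)^q}{q!}\cdot\frac{q!}{(q/2)!2^{q/2}}
=\Bigl(\sum_{m\ge0}\frac{(-t^2/2)^m}{m!}\Bigr)\Bigl(\sum_{m\ge0}\frac{(-s^2/2)^m}{m!}\Bigr)
\to e^{-t^2/2}e^{-s^2/2},
$$
up to the tail of these exponential series beyond $p,q=P$, which is bounded by something like $(t^2/2)^{P/2}/(P/2)!$; with $|t|\le\frac14(\log M)^{\delta/2}$ and $P\asymp(\log M)^{\delta}$, Stirling makes this $\ll e^{-C(\log M)^{\delta}}$. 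The error terms $O(T_{M,N})$ in Lemma \ref{lem:2}, summed against $|t|^p|s|^q/(p!q!)$ over $p,q\le P$, contribute
$$
\ll\Bigl(\sum_{p\le P}\frac{|t|^pC_{2P}^{?}}{p!}\Bigr)\cdot\bigl(M^{-1/2}+\sqrt{M/N}\bigr);
$$
here $C_{p+q}=\exp(c(p+q)^6)$ with $p+q\le 2P\asymp(\log M)^{\delta}$ gives a factor $\exp(c(2P)^6)=\exp(C(\log M)^{6\delta})$, and an extra $(\log M)^{O(1)}$ from the remaining summation is absorbed to yield $e^{C(\log M)^{7\delta}}(M^{-1/2}+\sqrt{M/N})$, matching the claimed bound. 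For the tail $\max(p,q)>P$ I would not use Lemma \ref{lem:2} at all but the crude bound $|\E(S_I/\sigma_I)^p(S_J/\sigma_J)^q|\le \|S_I/\sigma_I\|_{p+q}^p\|S_J/\sigma_J\|_{p+q}^q$ together with the $L^{p+q}$ estimate implicit in Lemma \ref{lemma2} (namely $\|S_I\|_r\ll r\sigma_I+\exp(Cr^6)\sqrt{M}\log M\ll r\sigma_I$ once $r$ is not too large relative to $M$), so that this part is again dominated by a factorial tail of an exponential series and is $\ll e^{-C(\log M)^{\delta}}$.

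The main obstacle is bookkeeping the interplay between the three scales: the size of $t,s$, the truncation level $P$, and the growth $C_{p+q}=\exp(c(p+q)^6)$ of the constant from the Diophantine input. One must choose $P$ large enough that the exponential-series tails are $\ll e^{-C(\log M)^{\delta}}$ yet small enough that $\exp(c(2P)^6)$ stays below $e^{C(\log M)^{7\delta}}$ — which forces $P\asymp(\log M)^{\delta}$ and explains the exponents $\delta$ and $7\delta$ in the statement. A secondary technical point is justifying the interchange of summation and expectation in the characteristic-function expansion: since $S_I,S_J$ are bounded (as $f$ is a trigonometric polynomial and $\|g\|_\infty$ is controlled), $e^{itS_I/\sigma_I+isS_J/\sigma_J}$ is a bounded analytic function and the series converges absolutely, so dominated convergence applies and no delicate justification is needed. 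Everything else is the routine assembly of geometric and factorial series sketched above.
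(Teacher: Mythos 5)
Your main-part analysis coincides with the paper's: the same truncation level $L\asymp(\log M)^{\delta}$, the leading terms of Lemma \ref{lem:2} reassembling into the partial sums of $e^{-t^2/2}e^{-s^2/2}$ with Stirling controlling their tails via $t^2\le L/24$, and the $O(T_{M,N})$ errors accumulating to $e^{C(\log M)^{7\delta}}(M^{-1/2}+\sqrt{M/N})$. (Minor point: the exponent $7\delta$ comes from $C_{p+q}^{p+q}=\exp\bigl(c(p+q)^{7}\bigr)$, i.e.\ from the power $p+q$ sitting on $C_{p+q}$ in $T_{M,N}$, not from an extra logarithmic factor absorbed into $\exp(c(2P)^6)$.)

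The genuine gap is in your treatment of the tail $\max(p,q)>P$ of the infinite double series. Since $f$ is a trigonometric polynomial, the only pointwise bound available is $|S_I/\sigma_I|\ll\sqrt{M}$, so the crude term-by-term estimate gives a tail comparable to $\sum_{p>P}(|t|\,C\sqrt{M})^p/p!\approx e^{|t|C\sqrt{M}}$, which is enormous for $P\asymp(\log M)^{\delta}\ll\sqrt M$. Your proposed substitute, the $L^{r}$ bound from Lemma \ref{lemma2}, reads correctly as $\|S_I\|_r\ll\sqrt{r}\,\sigma_I+\exp(Cr^6)\sqrt{M}\log M$, and the factor $\exp(Cr^6)$ raised to the power $r$ yields $\exp(Cr^7)$, which outgrows $r!$; hence the series $\sum_{r>P}\bigl((|t|+|s|)\exp(Cr^6)\log M\bigr)^r/r!$ diverges and the qualifier ``once $r$ is not too large relative to $M$'' cannot be met uniformly over the tail. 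The paper avoids this entirely by never expanding into an infinite series: it writes $\exp(itS_I/\sigma_I)=U_L+\theta_L|t|^L|S_I/\sigma_I|^L/L!$ using the elementary remainder bound $|e^{ix}-\sum_{p<L}(ix)^p/p!|\le|x|^L/L!$, so that after multiplying the two expansions the error terms ($I_2,I_3,I_4$ in the paper) involve only moments of the single order $L=2[(\log M)^{\delta}]$ (and the mixed moment of order $(L,L)$), which Lemma \ref{lem:2} does control and which give $\ll 4^{-L}\ll e^{-C(\log M)^{\delta}}$ via $t^2\le L/24$. Replacing your series tail by this finite Taylor expansion with remainder is the missing ingredient; the rest of your argument then goes through.
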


Lemma \ref{lem:3} (and also Lemma \ref{lem:5} below) show that the
random variables $S_I/\sigma_I$ and $S_J/\sigma_J$ are
asymptotically independent if $|I|\to\infty$, $|J|\to\infty$,
$|I|/|J|\to 0$. Note that $I$ and $J$ are arbitrary disjoint
subsets of $\mathbb N$: they do not have to be intervals, or being
separated by some number $x\in {\mathbb R}$, they can be also
"interlaced". Thus $\{n_kx\}$ obeys an "interlaced" mixing
condition, an unusually strong near independence property
introduced by Bradley \cite{br}. Note that this property is
permutation-invariant, explaining the permutation-invariance of
the CLT and LIL in Theorem 1.

It is easy to extend Lemma \ref{lem:3} for the joint
characteristic function of normed sums $S_{I_1}/\sigma_{I_1},
\ldots S_{I_d}/\sigma_{I_d}$ of $d$ disjoint blocks $I_1, \ldots
I_d$, $d\ge 3$. Since, however, the standard mixing conditions
like $\alpha$-mixing, $\beta$-mixing, etc.\ involve pairs of
events and the present formulation will suffice for the CLT and
LIL for $f(n_{\sigma (k)}x)$, we will consider only the case
$d=2$.

\begin{proof}
Using $\left| e^{ix} - \sum^{k - 1}_{p = 0} \dfrac{(i x)^p}{p!}
\right|  \leq \dfrac{|x|^k}{k!}$, valid for any $x \in \mathbb R$,
$k \geq 1$ we get for any $L \geq 1$
\begin{align*}
\exp \left(it S_I/\sigma_I \right) &=
\sum^{L - 1}_{p = 0} \frac{(it)^p}{p!} ( S_I/\sigma_I)^p + \\
&\quad + \theta_L(t,x,I) \frac{|t|^L}{L!} |S_I/\sigma_I|^L =\\
&= : U_L(t,x,I) + \theta_L(t,x,I) \frac{|t|^L}{L!} |
S_I/\sigma_I|^L
\end{align*}
where $|\theta_L(t,x,I)| \leq 1$. Writing a similar expansion for
$\exp (is S_J/\sigma_J)$ and multiplying, we get
\begin{align*}
&\E\left(\exp ( it S_I/\sigma_I
+ is S_J/\sigma_J)\right) = \\
&= \E \bigl(U_L(t,x,I) U_L(s,x,J) \bigr) + \E \left( U_L(t,x,I)
\theta_L(s,x,J) \frac{|s|^L}{L!} |S_J/\sigma_J|^L \right) + \\
&\quad + \E \left(U_L(s,x,J) \theta_L (t,x,I) \frac{|t|^L}{L!}
|S_I/\sigma_I|^L\right) +\\
&\quad + \E \left( \frac{|t|^L}{L!} \frac{|s|^L}{L!}
|S_I/\sigma_I|^L |S_J/\sigma_J|^L \theta_L (t, x, I) \theta_L(s,x,J) \right) =\\
&= I_1 + I_2 + I_3 + I_4.
\end{align*}
We estimate $I_1$, $I_2$, $I_3$, $I_4$ separately. We choose $L =
2 \bigl[(\log M)^\delta\bigr]$ and use  Lemma~\ref{lem:2} to get
\begin{align*}
I_1 &= \sum^{L - 1}_{\substack{p,q = 0\\ p,q \text{ even}}}
\frac{(it)^p}{(p/2)! 2^{p/2}} \frac{(is)^q}{(q/2)! 2^{q/2}} +\\
&\quad + O(1) \sum^{L - 1}_{p,q = 0} \frac{|t|^p}{p!}
\frac{|s|^q}{q!} C^{2L}_{2L} \bigl(M^{-1/2} + (M/N)^{1/2} \bigr) \\
&=: I_{1,1} + I_{1,2} .
\end{align*}
Here
\begin{align*}
&e^{-(t^2 + s^2)/2} - I_{1,1} =\\
&=\! \biggl( \underset{p \text{ even}}{\sum\limits^{L - 1}_{p =
0}} \! \frac{(it)^p}{(p/2)! 2^{p/2}} \biggr)\! \biggl( \underset{q
\text{ even}}{\sum\limits^{\infty}_{q = L}} \!
\frac{(is)^q}{(q/2)! 2^{q/2}} \biggr)\! +\! \biggl( \underset{p
\text{ even}}{\sum\limits^{\infty}_{p = L}} \!
\frac{(it)^p}{(p/2)! 2^{p/2}} \biggr)\! \biggl( \underset{q \text{
even}}{\sum\limits^{\infty}_{q = 0}} \! \frac{(is)^q}{(q/2)!
2^{s/2}} \biggr).
\end{align*}
Using $n!\ge (n/3)^n$ and $t^2 \leq L/24 \leq p/24$ we get
\begin{align*}
\Biggl| \underset{p \text{ even}}{\sum^\infty_{p = L}}
\frac{(it)^p}{(p/2)! 2^{p/2}}\Biggr| &\leq \sum^\infty_{p = L}
\frac{|t|^p}{(p / 3)^{p/2}} =
\sum^\infty_{p = L} \biggl(\frac{t^2}{p/3}\biggr)^{p/2} \leq\\
&\leq \sum^\infty_{p = L} \left(\frac{1}{4}\right)^{p/2} \leq 2
\cdot 2^{-L} \leq 8e^{-(\log M)^\delta}
\end{align*}
and similarly
$$
\Biggl| \underset{q \text{ even}}{\sum\limits^\infty_{q = L}}
\frac{(is)^q}{(q/2)! 2^{q/2}} \Biggr| \ll e^{-(\log M)^\delta}.
$$
Thus
$$
\Biggl| \underset{p \text{ even}}{\sum\limits^{L - 1}_{p = 0}}
\frac{(it)^p}{(p/2)! 2^{p/2}} \Biggr| \leq e^{-t^2/2} + \Biggl|
\underset{p \text{ even}} {\sum\limits^\infty_{p = L}}
\frac{(it)^p}{(p/2)! 2^{p/2}} \Biggr| \leq 9,
$$
and a similar estimate holds for
$$
\sum^{L - 1}_{\substack{q = 0\\ q \text{ even}}}
\frac{(is)^q}{(q/2)! 2^{q/2}}.
$$
Consequently
$$
\left| I_{1,1} - e^{-(t^2 + s^2)/2}\right| \ll  e^{-(\log
M)^\delta}.
$$
On the other hand,
\begin{align*}
|I_{1,2}| &\ll \left(\sum_{p=0}^\infty \frac{|t|^p}{p!}\right)
\left(\sum_{q=0}^\infty \frac{|s|^q}{q!}\right) C^{2L}_{2L}
\bigl(M^{-1/2}
+ (M/N)^{1/2}\bigr) \\
&\ll e^{|t|+|s|} e^{C(2L)^7} \bigl(M^{-1/2}
+ (M/N)^{1/2}\bigr)  \\
&\ll e^{C(\log M)^{7\delta}} \bigl(M^{-1/2} + (M/N)^{1/2}\bigr).
\end{align*}
Thus we proved
$$
\bigl|I_1 - e^{-(t^2 + s^2)/2}\bigr| \ll e^{-C(\log M)^\delta} +
e^{C(\log M)^{7\delta}} \bigl(M^{-1/2} + (M/N)^{1/2}\bigr).
$$
Next we estimate $I_4$. Using Lemma~\ref{lem:2} and $t^2\le L/24$
we get, since $L$ is even,
\begin{align*}
I_4 &\le \frac{|t|^L}{L!} \frac{|s|^L}{L!} \E |S_I/\sigma_I|^L |S_J/\sigma_J|^L  \\
&\ll \frac{|t|^L}{L!} \frac{|s|^L}{L!} \left[
\biggl(\frac{L!}{(L/2)! 2^{L/2}} \biggr)^2 + C^{2L}_{2L}
\bigl(M^{-1/2}
+ (M/N)^{1/2} \bigr)\right]  \\
&\ll \frac{|t|^L |s|^L}{((L/2)!)^2} + \frac{|t|^L |s|^L}{(L!)^2}
C^{2L}_{2L} \bigl(M^{-1/2} + (M/N)^{1/2} \bigr) \\
&\ll \left(\frac{t^2}{L/6}\right)^{L/2} \left(\frac{s^2}{L/6}\right)^{L/2} \\
&\quad + \left(\frac{t^2}{L/6}\right)^{L/2}
\left(\frac{s^2}{L/6}\right)^{L/2} e^{C(2L)^7}
\bigl(M^{-1/2} + (M/N)^{1/2} \bigr)  \\
&\ll 4^{-L} + 4^{-L} e^{C(2L)^7} \bigl(M^{-1/2}
+ (M/N)^{1/2} \bigr) \\
&\ll e^{-C(\log M)^\delta} + e^{C(\log M)^{7\delta}}
\bigl(M^{-1/2} + (M/N)^{1/2} \bigr).
\end{align*}
Finally we estimate $I_2$ and $I_3$. Clearly
\begin{align*}
|U_L(t,x,I)| &\leq | \exp ( it S_I/\sigma_I ) | + \frac{|t|^L}{L!} |S_I/\sigma_I|^L \\
&\leq 1 + \frac{|t|^L}{L!} |S_I/\sigma_I|^L
\end{align*}
and thus
\begin{align*}
|I_2| &\leq \E \left(\frac{|s|^L}{L!} |S_J/\sigma_J|^L\right)
 + \E \left(\frac{|t|^L}{L!} \frac{|s|^L}{L!}
 |S_I/\sigma_I|^L |S_J/\sigma_J|^L\right).
\end{align*}
Here the second summand can be estimated exactly in the same way
as $I_4$ and the first one can be estimated by using Lemma
\ref{lemma2}. Thus we get
$$
|I_2| \ll e^{-C(\log M)^\delta} + e^{C(\log M)^{7\delta}}
\bigl(M^{-1/2} + (M/N)^{1/2} \bigr).
$$
A similar bound holds for $I_3$ and this completes the proof of
Lemma~\ref{lem:3}.
\end{proof}

\begin{lem}
\label{lem:4} Let $F$ and $G$ be probability distributions on
$\mathbb R^2$  with characteristic functions $\varphi$ and
$\gamma$, respectively and let $T>0$. Then there exists a
probability distribution $H$ on $\mathbb R^2$ such that
$H\left(|x| \geq T^{-1/2} \log T\right) \ll e^{-T/2}$ and for any
Borel set $B \subset [-T, T]^2$
$$
\bigl|(F * H)(B) - (G * H)(B)\bigr| \ll T^2 \int\limits_{[-T,T]^2}
|\varphi(u) - \gamma(u)|du + e^{-(\log T)^2/4}.
$$
The constants implied by $\ll$ are absolute.
\end{lem}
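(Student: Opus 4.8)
\emph{Proof plan.} The plan is to take $H$ to be a suitably scaled Gaussian and then convert closeness of the characteristic functions into closeness of the smoothed measures by Fourier inversion. We may assume $T$ exceeds a suitable absolute constant (for bounded $T$ the claim is trivial after enlarging the implied constants). Put $\varepsilon=(\log T)/T$ and let $H$ be the centered bivariate normal law $N(0,\varepsilon^2 I_2)$, with density $\rho(x)=(2\pi\varepsilon^2)^{-1}\exp(-|x|^2/(2\varepsilon^2))$. If $X\sim H$ then $|X|^2/\varepsilon^2$ has a $\chi^2_2$-distribution, so
\[
H\bigl(|x|\ge T^{-1/2}\log T\bigr)=\exp\!\Bigl(-\tfrac12\bigl((T^{-1/2}\log T)/\varepsilon\bigr)^2\Bigr)=e^{-T/2},
\]
since $(T^{-1/2}\log T)/\varepsilon=T^{1/2}$; this gives the required tail bound, and $H$ depends only on $T$, not on $F$, $G$ or $B$, which is what the intended application needs.

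For the main inequality, fix a Borel set $B\subseteq[-T,T]^2$ and set $h=\mathbf 1_B*\rho$. Using the symmetry of $\rho$ one has $(F*H)(B)=\int h\,dF$ and $(G*H)(B)=\int h\,dG$, hence
\[
(F*H)(B)-(G*H)(B)=\int_{\R^2}h(x)\,d(F-G)(x).
\]
Here $h$ is continuous, $0\le h\le 1$, and its Fourier transform $\widehat h(t)=\widehat{\mathbf 1_B}(t)\,e^{-\varepsilon^2|t|^2/2}$ satisfies $|\widehat h(t)|\le|B|\,e^{-\varepsilon^2|t|^2/2}\le 4T^2 e^{-\varepsilon^2|t|^2/2}$; in particular $\widehat h\in L^1(\R^2)$. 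Fourier inversion applied to $h$ together with Fubini's theorem gives
\[
\int_{\R^2}h\,d(F-G)=\frac{1}{(2\pi)^2}\int_{\R^2}\widehat h(t)\,\overline{\bigl(\varphi(t)-\gamma(t)\bigr)}\,dt,
\]
so that $\bigl|(F*H)(B)-(G*H)(B)\bigr|\le (2\pi)^{-2}\int_{\R^2}|\widehat h(t)|\,|\varphi(t)-\gamma(t)|\,dt$.

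Next I split this integral at the boundary of $[-T,T]^2$. On $[-T,T]^2$ I bound $|\widehat h(t)|\le|B|\le 4T^2$, producing the term $\ll T^2\int_{[-T,T]^2}|\varphi-\gamma|\,dt$. Off $[-T,T]^2$ I use $|\varphi-\gamma|\le 2$ and the Gaussian bound on $|\widehat h|$, which reduces matters to estimating $T^2\int_{\{|t_1|>T\}\cup\{|t_2|>T\}}e^{-\varepsilon^2|t|^2/2}\,dt$. Factoring the Gaussian and using $\int_a^\infty e^{-u^2/2}\,du\le a^{-1}e^{-a^2/2}$ with $a=\varepsilon T=\log T$, this is
\[
\ll T^2\cdot\varepsilon^{-2}(\log T)^{-1}e^{-(\log T)^2/2}=\frac{T^4}{(\log T)^3}\,e^{-(\log T)^2/2}\ll e^{-(\log T)^2/4},
\]
because $4\log T-3\log\log T-\tfrac14(\log T)^2\to-\infty$. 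Adding the two contributions yields the asserted inequality, with all implied constants absolute.

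The one genuine decision in this argument is the choice of scale $\varepsilon=(\log T)/T$: it is precisely what makes the tail of $H$ at radius $T^{-1/2}\log T$ of order $e^{-T/2}$ while keeping $\widehat H(t)=e^{-\varepsilon^2|t|^2/2}$ negligible (of order $e^{-(\log T)^2/2}$, up to polynomial factors) outside $[-T,T]^2$. After that it is the classical Esseen-type smoothing estimate, and the only mildly delicate point is checking that the polynomial factor $T^4(\log T)^{-3}$ is absorbed by the slightly weaker error $e^{-(\log T)^2/4}$ permitted in the statement; I do not anticipate any serious obstacle.
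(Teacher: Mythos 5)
Your proof is correct and follows essentially the same route as the paper: the same Gaussian smoothing kernel $N(0,\varepsilon^2 I)$ with $\varepsilon=(\log T)/T$, Fourier inversion, a split of the frequency integral at $[-T,T]^2$, and the factor $T^2$ coming from $|B|\le 4T^2$. The only (cosmetic) difference is that the paper bounds the difference of the densities of $F*H$ and $G*H$ uniformly and then integrates over $B$, whereas you pair $F-G$ against the smoothed indicator $\mathbf 1_B*\rho$ in Fourier space; the underlying estimates are identical.
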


\begin{proof}
Let $\zeta_0$ be a standard $N(0, {\bf I})$ random variable in
${\mathbb R}^2$ and $\zeta= \frac{\log T}{T} \zeta_0$. Clearly we
have
$$
P\left(|\zeta| \geq \frac{\log T}{\sqrt T}\right) =
P\bigl(|\zeta_0| \geq \sqrt{T} \bigr) \ll e^{-T/2}.
$$
Letting $\psi$ and $H$ denote, respectively, the characteristic
function and distribution of $\zeta$, we get
\begin{align*}
\bigl|f_{F * H} (x) - f_{G * H}(x)\bigr| &\leq (2\pi)^{-1}
\int\limits_{\mathbb R^2} |\varphi(u) - \gamma(u) |\,
|\psi (u)| du \leq \\
&\leq \int\limits_{[-T, T]^2} |\varphi (u) - \gamma (u)| du + 2
\int\limits_{u \notin [-T, T]^2} |\psi (u)|du,
\end{align*}
where $f_{F * H}$, $f_{G * H}$ denote  the density functions
corresponding to the distributions $F * H$ and $G * H$,
respectively. Letting $\tau = T^{-1} \log T$, we clearly have
$\psi (u)= e^{-\tau^2|u|^2/2}$ for $u\in {\mathbb R}^2$ and a
simple calculation shows
$$\int_{u \notin [-T, T]} |\psi(u)|du \ll e^{-(\log T)^2/3}.$$
Thus
$$
\bigl| f_{F * H} (x) - f_{G * H} (x) \bigr| \ll \int\limits_{[-T,
T]^2} |\varphi (u) - \gamma (u)| du + e^{-(\log T)^2 / 3}
\quad\text{for all} \ x\in {\mathbb R}^2
$$
whence for $B \subseteq [-T, T]^2$ we get
$$
|(F * H)(B) - (G * H)(B) | \ll T^2 \int\limits_{[-T, T]}
 |\varphi (u) - \gamma (u)|du + T^2 e^{-(\log T)^2/3},
$$
proving Lemma~\ref{lem:4}.
\end{proof}

\begin{lem}
\label{lem:5} Under the conditions of Lemma \ref{lem:2}  we have
for any $0<\delta<1$ and for $|x|, |y| \leq \frac{1}{8} (\log
M)^{\delta/2}$,
\begin{align}
\label{eq:6} &\left| P \left(S_I/\sigma_I \leq x, \,
S_J/\sigma_J \leq y\right) - \Phi (\hat x) \Phi (\hat y)\right| \ll \\
&\ll e^{-C(\log\log M)^2} + e^{C(\log M)^{7\delta}} \bigl(M^{-1/2}
+ (M/N)^{1/2} \bigr) \nonumber
\end{align}
where $\Phi$ is the standard normal distribution function and
$\hat x$, $\hat y$ are suitable numbers with $|\hat x - x| \leq
C(\log M)^{-\delta/8}$, $|\hat y - y| \leq C(\log M)^{-\delta /
8}$.
\end{lem}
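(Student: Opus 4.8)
The plan is to turn the characteristic‑function bound of Lemma \ref{lem:3} into a statement about the joint distribution function by smoothing with a small Gaussian kernel, using Lemma \ref{lem:4} as the bridge. Let $F$ be the law on $\mathbb R^2$ of $(S_I/\sigma_I,\,S_J/\sigma_J)$, with characteristic function $\varphi$, and let $G=N(0,\mathbf I)$, whose characteristic function is $\gamma(u)=e^{-|u|^2/2}$. Put $T=\tfrac14(\log M)^{\delta/2}$, so that Lemma \ref{lem:3} reads
$$
|\varphi(u)-\gamma(u)|\ \ll\ \Delta_M\ :=\ e^{-C(\log M)^\delta}+e^{C(\log M)^{7\delta}}\bigl(M^{-1/2}+\sqrt{M/N}\bigr)\qquad(u\in[-T,T]^2).
$$
First I would apply Lemma \ref{lem:4} with this $T$: it produces a distribution $H$ — explicitly $H=N(0,\tau^2\mathbf I)$ with $\tau=T^{-1}\log T$ — with $H(|z|\ge\eta)\ll e^{-T/2}$ for $\eta:=T^{-1/2}\log T$, and, for every Borel $B\subseteq[-T,T]^2$,
$$
\bigl|(F*H)(B)-(G*H)(B)\bigr|\ \ll\ T^2\!\!\int_{[-T,T]^2}\!\!|\varphi-\gamma|\,du+e^{-(\log T)^2/4}\ \ll\ T^4\Delta_M+e^{-(\log T)^2/4}.
$$
Since $T$ is a fixed power of $\log M$, the factor $T^4$ is harmless; moreover $\log T=\tfrac\delta2\log\log M+O(1)$, so $e^{-(\log T)^2/4}\ll e^{-C(\log\log M)^2}$, and $e^{-T/2}\ll e^{-C(\log\log M)^2}$ as well. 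Hence the right‑hand side is already of the order claimed in (\ref{eq:6}).

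Next I would de‑smooth. Fix $x,y$ with $|x|,|y|\le\tfrac18(\log M)^{\delta/2}=T/2$ and set $Q=(-\infty,x]\times(-\infty,y]$, $Q^{\pm}=(-\infty,x\pm\eta]\times(-\infty,y\pm\eta]$. With $(X,Y)\sim F$ and $(Z_1,Z_2)\sim H$ independent, the inclusion $\{X\le x,\,Y\le y\}\subseteq\{X+Z_1\le x+\eta,\ Y+Z_2\le y+\eta\}\cup\{Z_1>\eta\}\cup\{Z_2>\eta\}$ and its mirror image give $(F*H)(Q^-)-O(e^{-T/2})\le F(Q)\le (F*H)(Q^+)+O(e^{-T/2})$. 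Since the quadrants $Q^{\pm}$ are not subsets of $[-T,T]^2$, I split $(F*H)(Q^{\pm})$ into the part inside the square (handled by the step above together with Lemma \ref{lem:4}) and the part outside, which is $\le F(|z|\ge T-\eta)+H(|z|\ge\eta)\ll F(|z|\ge T/2)+e^{-T/2}$. The tail of $F$ is controlled by Markov's inequality with an even exponent $p\asymp(\log M)^\delta$: by Lemma \ref{lemma2} (applied to the trigonometric polynomial $f$, using $\sigma_I\gg M^{1/2}$),
$$
\E\bigl|S_I/\sigma_I\bigr|^p\ \le\ \frac{p!}{(p/2)!\,2^{p/2}}+O\!\bigl(e^{Cp^7}(\log M)^p M^{-1/2}\bigr)\ \ll\ (cp)^{p/2}+e^{C(\log M)^{7\delta}}M^{-1/2},
$$
so, choosing the implied constant in $p$ small enough, $\p(|S_I/\sigma_I|\ge T/4)\le(4/T)^p\,\E|S_I/\sigma_I|^p\ll 4^{-p/2}+e^{C(\log M)^{7\delta}}M^{-1/2}$, which is $\ll e^{-C(\log M)^\delta}+e^{C(\log M)^{7\delta}}M^{-1/2}$; the same holds for $J$ (with $N\ge M$, so $N^{-1/2}\le M^{-1/2}$), and the Gaussian tail $(G*H)(|z|\ge T/2)\ll e^{-cT^2}$ is even smaller. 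Collecting all of this, $F(Q)$ is squeezed between $(G*H)(Q^-)$ and $(G*H)(Q^+)$ up to an error $\ll e^{-C(\log\log M)^2}+e^{C(\log M)^{7\delta}}(M^{-1/2}+\sqrt{M/N})$.

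Finally, $G*H=N(0,(1+\tau^2)\mathbf I)$, so $(G*H)(Q^{\pm})=\Phi\!\bigl(\tfrac{x\pm\eta}{\sqrt{1+\tau^2}}\bigr)\Phi\!\bigl(\tfrac{y\pm\eta}{\sqrt{1+\tau^2}}\bigr)$, and these arguments differ from $x$ (resp.\ $y$) by at most $\eta+|x|\tau^2\ll(\log M)^{-\delta/4}\log\log M\le C(\log M)^{-\delta/8}$. Since $\Phi$ is $1$‑Lipschitz, the continuous monotone path $\theta\mapsto\Phi\bigl(\tfrac{x-\eta}{\sqrt{1+\tau^2}}+\tfrac{2\theta\eta}{\sqrt{1+\tau^2}}\bigr)\Phi\bigl(\tfrac{y-\eta}{\sqrt{1+\tau^2}}+\tfrac{2\theta\eta}{\sqrt{1+\tau^2}}\bigr)$, $\theta\in[0,1]$, runs from $(G*H)(Q^-)$ to $(G*H)(Q^+)$, so by the intermediate value theorem there exist $\hat x,\hat y$ in the respective $\eta$‑neighbourhoods with $|F(Q)-\Phi(\hat x)\Phi(\hat y)|$ bounded by the same error, and $|\hat x-x|,|\hat y-y|\le C(\log M)^{-\delta/8}$ — which is exactly (\ref{eq:6}). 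The main obstacle is not conceptual but bookkeeping: one must take $T$ and the Markov exponent $p$ to be the \emph{same} power $(\log M)^\delta$ so that the two genuinely different error contributions — the term $e^{-C(\log\log M)^2}$ coming from the coarseness of the smoothing kernel, and the arithmetic term $e^{C(\log M)^{7\delta}}(M^{-1/2}+\sqrt{M/N})$ inherited from Lemmas \ref{lemma2} and \ref{lem:3} — do not spoil each other, and one has to verify that the shift $\eta$, though $\ll(\log M)^{-\delta/8}$, is far too large to be absorbed into the error term, which is precisely why the statement is phrased with the auxiliary points $\hat x,\hat y$ instead of $x,y$.
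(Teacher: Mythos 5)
Your proposal is correct and follows essentially the same route as the paper: combine the characteristic-function estimate of Lemma \ref{lem:3} with the smoothing Lemma \ref{lem:4}, control the tails of $(S_I/\sigma_I,S_J/\sigma_J)$ via Lemma \ref{lemma2} and Markov's inequality, and then de-smooth, absorbing the shift of order $T^{-1/2}\log T$ into the auxiliary points $\hat x,\hat y$. The only (immaterial) deviations are your choice of Markov exponent $p\asymp(\log M)^\delta$ where the paper takes $p=2[\log\log M]$ -- both yield tails within the stated error -- and your more explicit intermediate-value argument for producing $\hat x,\hat y$, which the paper leaves implicit.
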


\begin{proof}
Let
$$
F = \text{\rm dist} \left( S_I/\sigma_I, S_J/\sigma_J\right),
\quad G = N(0, I), \quad T = (\log M)^{\delta/2}.
$$
By Lemmas \ref{lem:3} and \ref{lem:4} we have for any Borel set $B
\subseteq [-T, T]^2$
\begin{align*}
&\bigl| (F * H)(B) - (G * H)(B) \bigr| \\
&\ll T^2 \int\limits_{[-T,T]^2} |\varphi (u) - \gamma (u)| du +
e^{-(\log
T)^2/4} \\
&\ll (\log M)^{2\delta} \Bigl[ e^{-C(\log M)^\delta} + e^{C(\log
M)^{7\delta}} \bigl(M^{-1/2} + (M/N)^{1/2}\bigr)\Bigr]
+ e^{-c(\log\log M)^2} \\
&\ll e^{-C(\log M)^\delta} + e^{C(\log M)^{7\delta}}
\bigl(M^{-1/2} + (M/N)^{1/2} \bigr) + e^{-C(\log\log M)^2}
\end{align*}
where $H$ is a distribution on $\mathbb R^2$ such that
$$
H\bigl(x : |x| \geq C(\log M)^{-\delta/8} \bigr) \leq e^{-C(\log
M)^{\delta/2}}.
$$
Applying Lemma \ref{lemma2} with $p=2[\log\log M]$ and using the
Markov inequality, we get
\begin{align*}
&P ( |S_I/\sigma_I|\ge T)\le T^{-p}\E (|S_I/\sigma_I|^p)\ll
T^{-p}\frac{p!}{(p/2)!} 2^{-p/2}\\
& \ll 4^p(\log M)^{-\delta p/2} p^p=4^p \exp
\left(p\log p-\frac{\delta p}{2}\log\log M\right)\\
&\ll \exp (-C(\log\log M)^2)
\end{align*}
and a similar inequality holds for $P(|S_J/\sigma_J|\ge T)$.
Convolution with $H$ means adding an (independent) r.v.\ which is
$< C(\log M)^{-\delta/8}$ with the exception of a set with
probability $e^{-C(\log M)^{\delta/2}}$. Thus choosing $B = [-T,x]
\times [-T, y]$ with $|x| \leq C(\log M)^{\delta/2}$, $|y| \leq
C(\log M)^{\delta/2}$ we get
\begin{align}
\label{eq:7} &\left| P \left( S_I/\sigma_I \le x, \,
S_J/\sigma_J  \le y \right) - \Phi(\hat x) \Phi(\hat y)\right|\ll \\
&\ll e^{-C(\log\log M)^2} + e^{C(\log M)^{7\delta}} \bigl(M^{-1/2}
+ (M/N)^{1/2}\bigr) \nonumber
\end{align}
where $|\hat x - x| \leq C(\log M)^{-\delta/8}$, $|\hat y - y|
\leq C(\log M)^{-\delta/8}$.
\end{proof}

\bigskip\noindent
{\bf Remark.}  The one-dimensional analogue of Lemma \ref{lem:5}
can be proved in the same way (in fact, the argument is much
simpler):
$$|P(S_I/\sigma_I \le x)-\Phi (\hat{x})|\ll e^{-C(\log\log M)^2}$$
for $|x|\le \frac{1}{8} (\log M)^{\delta/2}$, where $|\hat x - x|
\leq C(\log M)^{-\delta/8}$. Using this fact, the statement of
Lemma \ref{lem:5} and simple algebra show that for $|x|, |y| \leq
\frac{1}{8} (\log M)^{\delta/2}$ we have
\begin{align*}
\label{eq:6} &\left| P \left(S_I/\sigma_I > x, \,
S_J/\sigma_J > y\right) - \Psi (\hat x) \Psi (\hat y)\right| \ll \\
&\ll e^{-C(\log\log M)^2} + e^{C(\log M)^{7\delta}} \bigl(M^{-1/2}
+ (M/N)^{1/2} \bigr) \nonumber
\end{align*}
where $\hat x$, $\hat y$ are suitable numbers with $|\hat x - x|
\leq C(\log M)^{-\delta/8}$, $|\hat y - y| \leq C(\log M)^{-\delta
/ 8}$. Here $\Psi (x)=1-\Phi (x)$.

\section{Proof of Theorem 1}

\bigskip
The CLT (\ref{fclt}) in Theorem \ref{th1} follows immediately from
Lemma \ref{lem:5}; see also the remark after Lemma \ref{lem:5}. To
prove the LIL (\ref{lilperm}), assume the conditions of Theorem
\ref{th1} and let $\sigma: {\mathbb N}\to {\mathbb N}$ be a
permutation of $\mathbb N$. Clearly for $p=O(\log\log N)$ we have
$\exp (Cp^7)\ll N^{1/4}$ and thus Lemma \ref{lemma2} implies
$$
\int_0^1 \left(\sum_{k=M+1}^{M+N} f(n_{\sigma(k)}x\right)^{2p} dx
\sim \frac{(2p)!}{p!}2^{-p} (1+O(N^{-1/4})) A_{N, M}^p \qquad
\text{as} \ N\to\infty
$$
uniformly for $p = O(\log\log N)$ and $M \ge 1$. Using this fact,
the upper half of the LIL (\ref{lilperm})  can be proved by
following the classical proof of Erd\H{o}s and G\'al \cite{eg} of
the LIL for lacunary trigonometric series. (The observation that
the upper half of the LIL follows from asymptotic moment estimates
was already used by Philipp \cite{ph1969} to prove the LIL for
mixing sequences.) To prove the lower half of the LIL we first
observe that the upper half of the LIL and relation
(\ref{starstar}) imply
\begin{equation}\label{fnorm}
 \limsup_{N\to\infty}\,(N\log\log N)^{-1/2} \sum_{k=1}^N f(n_{\sigma(k)}x)
\le K \|f\|^{1/8}  \qquad \text{a.e.}
\end{equation}
where $K$ is a constant depending on the generating elements of
$(n_k)$. Given any $f$ satisfying (\ref{fcond}) and
$\varepsilon>0$,  $f$ can be written as $f=f_1+f_2$ where $f_1$ is
a trigonometric polynomial and $\|f_2\|\le \varepsilon$, and thus
applying (\ref{fnorm}) with $f=f_2$ it is immediately seen that it
suffices to prove the lower half of the LIL for trigonometric
polynomials $f$.

Let $\theta \ge 2$ be an integer and set
$$
\eta_n = \frac{X_{\theta^n + 1} + \dots + X_{\theta^{n + 1}}}
{\gamma_n}
$$
where $X_j = f(n_{\sigma(j)} x)$, $\gamma^2_n = \text{\rm Var}
\bigl(X_{\theta^n + 1} + \dots + X_{\theta^{n + 1}}\bigr)$. Fix
$\varepsilon > 0$ and put
$$
A_n = \bigl\{ \eta_n \geq (1 - \varepsilon) (2\log\log
\gamma_n)^{1/2} \bigr\}.
$$
We will prove that $P(A_n \ \text{i.o.})=1$;  we use here an idea
of R\'ev\'esz \cite{re} and the following generalization of the
Borel-Cantelli lemma, see Spitzer \cite{sp}, p.\ 317.

\begin{lem}\label{renyi}
Let $A_n$, $n=1, 2, \ldots$ be events satisfying
$\sum_{n=1}^\infty P(A_n)=\infty$ and
$$
\lim_{N \to \infty} \frac{\sum_{1 \leq m, n \leq N} \bigl|P(A_m
\cap A_n) - P(A_m) P(A_n)\bigr|}{\Bigl(\sum^N_{n = 1} P(A_n)
\Bigr)^2} = 0.
$$
Then $P(A_n \ \text{i.o.})=1.$
\end{lem}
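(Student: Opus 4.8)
The plan is to prove Lemma~\ref{renyi} by the classical second moment (Paley--Zygmund / Chung--Erd\H{o}s) argument, applied not only to the whole sequence but to each of its tails. Set $S_N=\sum_{n=1}^N \mathds{1}_{A_n}$. Then $\E S_N=\sum_{n=1}^N P(A_n)\to\infty$, and expanding the square,
$$
\E S_N^2=\sum_{1\le m,n\le N}P(A_m\cap A_n)=\Bigl(\sum_{n=1}^N P(A_n)\Bigr)^2+\sum_{1\le m,n\le N}\bigl(P(A_m\cap A_n)-P(A_m)P(A_n)\bigr).
$$
By the covariance hypothesis the second term is $o\bigl((\E S_N)^2\bigr)$, so $\E S_N^2=(1+o(1))(\E S_N)^2$. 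By the Cauchy--Schwarz inequality $(\E S_N)^2=\bigl(\E(S_N\mathds{1}_{\{S_N>0\}})\bigr)^2\le \E S_N^2\cdot P(S_N>0)$, hence
$$
P\Bigl(\bigcup_{n=1}^N A_n\Bigr)=P(S_N>0)\ge\frac{(\E S_N)^2}{\E S_N^2}\longrightarrow 1 ,
$$
and since $P(\bigcup_{n\le N}A_n)$ is nondecreasing in $N$, it follows that $P\bigl(\bigcup_{n\ge 1}A_n\bigr)=1$.

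To turn this into the ``infinitely often'' statement I would observe that both hypotheses are inherited by every tail $A_m,A_{m+1},\dots$ of the sequence. Indeed $\sum_{n\ge m}P(A_n)=\infty$ still holds, and because $\sum_{n=1}^{m-1}P(A_n)$ is a fixed finite constant while the partial sums diverge, one has $\sum_{n=m}^{N}P(A_n)\sim\sum_{n=1}^{N}P(A_n)$; meanwhile the numerator $\sum_{m\le i,j\le N}\bigl|P(A_i\cap A_j)-P(A_i)P(A_j)\bigr|$ is dominated by the full sum $\sum_{1\le i,j\le N}|\cdots|$, so
$$
\frac{\sum_{m\le i,j\le N}\bigl|P(A_i\cap A_j)-P(A_i)P(A_j)\bigr|}{\bigl(\sum_{n=m}^{N}P(A_n)\bigr)^2}\longrightarrow 0 .
$$
Applying the first paragraph to this tail sequence gives $P\bigl(\bigcup_{n\ge m}A_n\bigr)=1$ for every $m$. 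The events $\bigcup_{n\ge m}A_n$ decrease in $m$, each has probability $1$, so by continuity from above
$$
P(A_n\ \text{i.o.})=P\Bigl(\bigcap_{m\ge 1}\bigcup_{n\ge m}A_n\Bigr)=\lim_{m\to\infty}P\Bigl(\bigcup_{n\ge m}A_n\Bigr)=1 .
$$

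The argument is essentially textbook (this is the form of the Borel--Cantelli lemma recorded in Spitzer \cite{sp}), so there is no serious obstacle; the only point requiring a word of justification is the tail-stability of the two hypotheses, which is the step that promotes ``some $A_n$ with $n\ge 1$ occurs almost surely'' to ``infinitely many $A_n$ occur almost surely'', and this is immediate since deleting finitely many initial terms alters $\sum_{n\le N}P(A_n)$ only by a bounded amount while that quantity tends to infinity.
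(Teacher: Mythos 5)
Your argument is correct and complete. Note that the paper does not prove this lemma at all --- it is quoted as a known generalization of the Borel--Cantelli lemma with a reference to Spitzer (p.~317) --- so there is no internal proof to compare against; what you have written is the standard Chung--Erd\H{o}s second-moment argument, and the one step that genuinely needs saying (passing from $P(\bigcup_{n\ge 1}A_n)=1$ to $P(A_n \text{ i.o.})=1$ by checking that both hypotheses survive deletion of finitely many initial terms and then intersecting the tail unions) is handled properly.
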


By the one-dimensional version of Lemma \ref{lem:5} (see the
remark at the end of Section 2) we have
\begin{equation}\label{star0}
P(A_n) = \Psi\bigl((1 - \varepsilon) (2 \log\log
\gamma_n)^{\frac12} + z_n \bigr) + O(e^{-C(\log n)^2})
\end{equation}
where $|z_n| \leq Cn^{-\delta/8}$. By the mean value theorem,
$\Psi (x)\sim (2\pi)^{-1/2}x^{-1} \exp(-x^2/2)$ and
$\theta^n \ll \gamma^2_n \ll \theta^n$ we have
\begin{align}\label{star1}
&\Psi\bigl((1 - \varepsilon) (2\log\log \gamma_n)^{1/2} + z_n\bigr)\\
&= \Psi\bigl((1 - \varepsilon)(2 \log\log \gamma_n)^{1/2}\bigr) +
\exp \left(-\frac{1}{2} \left[ (1-\varepsilon)(2\log\log
\gamma_n)^{1/2} +O(1)n^{-\delta/8}\right]^2
\right) O(n^{-\delta/8})\nonumber\\
&= \Psi\bigl((1 - \varepsilon)(2 \log\log \gamma_n)^{1/2}\bigr)+
\exp\left( -(1-\varepsilon)^2 \log\log \gamma_n +O(1)\right)O(n^{-\delta/8})\nonumber\\
&= \Psi\bigl((1 - \varepsilon)(2 \log\log \gamma_n)^{1/2}\bigr)+
O(1)\Psi \left( (1-\varepsilon)(2\log\log \gamma_n)^{1/2}\right)
(\log n)^{1/2}n^{-\delta/8}. \nonumber
\end{align}
In particular,
$$
\Psi\bigl((1 - \varepsilon) (2\log\log \gamma_n)^{1/2} +
z_n\bigr)\sim \Psi\bigl((1 - \varepsilon) (2\log\log
\gamma_n)^{1/2} \bigr)
$$
and thus (\ref{star0}) implies
\begin{equation}\label{star2} P(A_n) \sim  \Psi\bigl((1 - \varepsilon) (2\log\log
\gamma_n)^{1/2} \bigr) \gg  \frac{1}{n^{(1 - \varepsilon)^2}(\log
n)^{1/2}}.
\end{equation}
Hence the estimates in (\ref{star1}) yield
\begin{equation}\label{star3}
\Psi\bigl((1 - \varepsilon) (2\log\log \gamma_n)^{1/2} +
z_n\bigr)= \Psi\bigl((1 - \varepsilon) (2\log\log
\gamma_n)^{1/2}\bigr)+ O(P(A_n) n^{-\delta/16}).
\end{equation}
Now by Lemma \ref{lem:5} for $m\le n$ (see the Remark at the end
of Section 2)
\begin{align}\label{star4}
P(A_m \cap A_n) &= \Psi\bigl((1 - \varepsilon) (2\log\log
\gamma_m)^{1/2}
+ z_1 \bigr) \Psi\bigl((1 - \varepsilon) (2 \log\log \gamma_n)^{1/2} + z_2\bigr) \\
&\quad + O(1) \Bigl[ e^{-C(\log m)^2} + e^{Cm^{7\delta}} (e^{-Cm}
+ e^{-C(n - m)})\Bigr],\nonumber
\end{align}
provided $\log n \le m^{\delta/2}$. The expression $\Psi(\dots)
\Psi(\dots)$ in (\ref{star4}) equals by \eqref{star2},
\eqref{star3},
\begin{align*}
&\Psi\bigl((1 - \varepsilon) (2\log\log \gamma_m)^{1/2} \bigr)
\Psi\bigl((1 - \varepsilon) (2 \log\log \gamma_n)^{1/2} \bigr) +
O\left(P(A_m)P(A_n)m^{-\delta/16}\right).
\end{align*}
Hence, assuming also $n-m\ge m^{8\delta}$ we get from
(\ref{star4}),
\begin{align}\label{star5}
P(A_m \cap A_n) &= \Psi\bigl((1 - \varepsilon)(2 \log\log
\gamma_m)^{1/2}
\bigr) \Psi\bigl((1 - \varepsilon) (2 \log\log \gamma_n)^{1/2} \bigr) \\
&\quad + O\left( P(A_m)P(A_n) m^{-\delta/16}\right)  +
O\left(e^{-C(\log m)^2}\right). \nonumber
\end{align}
Further, by  \eqref{star0} and the above estimates
\begin{align*}
P(A_m) P(A_n) &= \Psi\bigl((1 - \varepsilon)(2\log\log
\gamma_m)^{1/2} \bigr)
\Psi\bigl((1 - \varepsilon)(2\log\log \gamma_n)^{1/2}\bigr) \\
&\quad + O\left(P(A_m) P(A_n) m^{-\delta/16}\right) + O\left(
e^{-C(\log m)^2}\right),
\end{align*}
and thus we obtained
\begin{lem}
\label{lem:6} We have
\begin{align*}
&\bigl|P(A_m \cap A_n) - P(A_m) P(A_n)\bigr| \ll P(A_m)P(A_n)
m^{-\delta/16} + e^{-C(\log m)^2}
\end{align*}
provided $n - m \geq m^{8\delta}$ and  $\log n \le m^\delta$.
\end{lem}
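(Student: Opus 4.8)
The plan is to obtain the lemma by a single subtraction of two expansions we already possess. Write $\Psi_j:=\Psi\bigl((1-\varepsilon)(2\log\log\gamma_j)^{1/2}\bigr)$. Equation (\ref{star5}) says
\[
P(A_m\cap A_n)=\Psi_m\Psi_n+O\bigl(P(A_m)P(A_n)m^{-\delta/16}\bigr)+O\bigl(e^{-C(\log m)^2}\bigr),
\]
and the display immediately preceding the statement of the lemma says
\[
P(A_m)P(A_n)=\Psi_m\Psi_n+O\bigl(P(A_m)P(A_n)m^{-\delta/16}\bigr)+O\bigl(e^{-C(\log m)^2}\bigr)
\]
(with any $e^{-C(\log n)^2}$ arising in its derivation absorbed into $e^{-C(\log m)^2}$ since $n>m$). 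Subtracting, the common main term $\Psi_m\Psi_n$ cancels identically and precisely the asserted bound $\bigl|P(A_m\cap A_n)-P(A_m)P(A_n)\bigr|\ll P(A_m)P(A_n)m^{-\delta/16}+e^{-C(\log m)^2}$ remains. In short, once (\ref{star5}) and that last display are in place the lemma is immediate; it is essentially a repackaging of the computation that precedes it.

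For context, the substance is in reaching (\ref{star5}). Here $A_j=\bigl\{\eta_j\ge(1-\varepsilon)(2\log\log\gamma_j)^{1/2}\bigr\}$ with $\eta_j=S_{I_j}/\gamma_j$, $\gamma_j=\sigma_{I_j}$, and $I_j$ the $\sigma$-image of the block $\{\theta^j+1,\dots,\theta^{j+1}\}$. For $m<n$ the blocks $I_m,I_n$ are disjoint, $|I_m|\asymp\theta^m=:M$ and $|I_n|\asymp\theta^n=:N$, so $M/N\asymp\theta^{m-n}$ is as small as one wishes, and by (\ref{anm}) together with (\ref{starstar}) one has $\sigma_{I_m}\asymp\sqrt M$, $\sigma_{I_n}\asymp\sqrt N$; thus the $\Psi$-version of Lemma~\ref{lem:5} from the Remark at the end of Section~2 applies to $(\eta_m,\eta_n)$ and yields (\ref{star4}). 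One then invokes (\ref{star0})--(\ref{star3}) to remove the shifts $z_1,z_2$ from (\ref{star4}) at the cost of a relative error $O(m^{-\delta/16})$, using $\Psi_j\asymp P(A_j)$ and $n^{-\delta/16}\le m^{-\delta/16}$. The delicate point is absorbing the exponentially small term $e^{Cm^{7\delta}}\bigl(e^{-Cm}+e^{-C(n-m)}\bigr)$ from (\ref{star4}) into $e^{-C(\log m)^2}$: from $n-m\ge m^{8\delta}$ one gets $e^{Cm^{7\delta}-C(n-m)}\le e^{-C'm^{8\delta}}$, and, $\delta$ being sufficiently small (in particular $7\delta<1$), $e^{Cm^{7\delta}-Cm}\le e^{-C'm}$, both of which are $\ll e^{-C(\log m)^2}$ since a fixed positive power of $m$ eventually dominates $(\log m)^2$. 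That is the only use of the hypothesis $n-m\ge m^{8\delta}$; the hypothesis $\log n\le m^{\delta}$ serves to keep the normalized threshold, of size $\asymp\sqrt{\log n}$, within the window of size $\asymp(\log M)^{\delta/2}$ in which the Gaussian estimate of Lemma~\ref{lem:5} is valid.

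Accordingly the main obstacle is not in this last step — which, as explained, is a one-line subtraction — but in the fact that it rests on the full strength of Lemmas~\ref{lem:2}--\ref{lem:5}, above all on the interlaced-mixing estimate of Lemma~\ref{lem:5} holding for two disjoint blocks of very different sizes. Granting that, the present lemma requires only careful bookkeeping of the $O$-terms, together with the two elementary comparisons just indicated, showing that the exponentially small contributions are swallowed by $e^{-C(\log m)^2}$ under the stated hypotheses.
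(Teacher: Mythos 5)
Your proposal is correct and coincides with the paper's own derivation: Lemma~\ref{lem:6} is obtained exactly by subtracting the expansion of $P(A_m)P(A_n)$ (the display just before the lemma) from \eqref{star5}, so that the common main term $\Psi\bigl((1-\varepsilon)(2\log\log\gamma_m)^{1/2}\bigr)\Psi\bigl((1-\varepsilon)(2\log\log\gamma_n)^{1/2}\bigr)$ cancels. Your accounting of how \eqref{star4}--\eqref{star5} arise from the Remark after Lemma~\ref{lem:5}, of the role of the hypotheses $n-m\ge m^{8\delta}$ and $\log n\le m^{\delta}$, and of the absorption of $e^{Cm^{7\delta}}\bigl(e^{-Cm}+e^{-C(n-m)}\bigr)$ into $e^{-C(\log m)^2}$ likewise matches the paper (and is, if anything, slightly more explicit about needing $\delta$ small).
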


We can now prove

\begin{lem}
\label{lem:7} We have
$$
\lim_{N \to \infty} \frac{\sum_{1 \leq m, n \leq N} \bigl|P(A_m
\cap A_n) - P(A_m) P(A_n)\bigr|}{\Bigl(\sum^N_{n = 1} P(A_n)
\Bigr)^2} = 0.
$$
\end{lem}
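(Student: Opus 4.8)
The plan is to bound the numerator $\sum_{1\le m,n\le N}\bigl|P(A_m\cap A_n)-P(A_m)P(A_n)\bigr|$ by splitting the pairs according to whether Lemma \ref{lem:6} is applicable, estimate each part separately, and divide by the denominator $\bigl(\sum_{n\le N}P(A_n)\bigr)^2$. First I would record orders of magnitude. Since $\theta^n\ll\gamma_n^2\ll\theta^n$ one has $\log\log\gamma_n=\log n+O_\theta(1)$, so (\ref{star2}) together with $\Psi(x)\sim(2\pi)^{-1/2}x^{-1}e^{-x^2/2}$ gives $P(A_n)\asymp n^{-(1-\varepsilon)^2}(\log n)^{-1/2}$, and since $(1-\varepsilon)^2<1$ an Abel summation yields $\sum_{n\le N}P(A_n)\asymp N^{1-(1-\varepsilon)^2}(\log N)^{-1/2}$; thus the denominator is $\asymp N^{2-2(1-\varepsilon)^2}(\log N)^{-1}$, and in particular $\sum_nP(A_n)=\infty$, so the diagonal terms give $\sum_n|P(A_n)-P(A_n)^2|\le\sum_nP(A_n)=o\bigl((\sum_nP(A_n))^2\bigr)$.

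Fix $\varepsilon$ and introduce two parameters, $\delta\in(0,1)$ small and a cutoff $m_0=m_0(N)=\exp(\kappa\sqrt{\log N})$ with $\kappa$ large, both to be specified in terms of $\varepsilon$ (and the constant $C$ of Lemma \ref{lem:6}). By symmetry it suffices to treat the pairs $m<n\le N$, which I split into: \emph{(a)} $m<m_0(N)$; \emph{(b)} $m\ge m_0(N)$ and $n-m<m^{8\delta}$; \emph{(good)} $m\ge m_0(N)$ and $n-m\ge m^{8\delta}$. For good pairs Lemma \ref{lem:6} applies, since $m\ge m_0(N)$ forces $m^\delta\ge\log N\ge\log n$ for $N$ large. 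On pairs of types (a) and (b) I use only the trivial bound $|P(A_m\cap A_n)-P(A_m)P(A_n)|\le 2\min\bigl(P(A_m),P(A_n)\bigr)$. Type (a) then contributes at most $2\,m_0(N)\sum_{n\le N}P(A_n)\ll e^{\kappa\sqrt{\log N}}N^{1-(1-\varepsilon)^2}(\log N)^{-1/2}$, which is $o\bigl((\sum_nP(A_n))^2\bigr)$ since $1-(1-\varepsilon)^2<2-2(1-\varepsilon)^2$ and $e^{\kappa\sqrt{\log N}}=N^{o(1)}$. Type (b) contributes at most $2\sum_{m\le N}m^{8\delta}P(A_m)\ll N^{1+8\delta-(1-\varepsilon)^2}(\log N)^{-1/2}$, which is $o\bigl((\sum_nP(A_n))^2\bigr)$ provided $8\delta<1-(1-\varepsilon)^2$ — the only constraint on $\delta$.

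For good pairs, Lemma \ref{lem:6} leaves the errors $P(A_m)P(A_n)m^{-\delta/16}$ and $e^{-C(\log m)^2}$. The first, summed over all good pairs, is $\le\bigl(\sum_mP(A_m)m^{-\delta/16}\bigr)\bigl(\sum_nP(A_n)\bigr)$, and since $m^{-\delta/16}\to0$ while $\sum_mP(A_m)=\infty$, the first factor is $o\bigl(\sum_mP(A_m)\bigr)$, so this contribution is $o\bigl((\sum_nP(A_n))^2\bigr)$. For the second, every good pair has $m\ge m_0(N)$, hence $e^{-C(\log m)^2}\le N^{-C\kappa^2}$; using $\sum_{m\ge m_0}e^{-C(\log m)^2}\ll e^{-\frac C2(\log m_0)^2}=N^{-C\kappa^2/2}$ and the trivial count of at most $N$ admissible $n$ per $m$, the total is $\ll N^{1-C\kappa^2/2}$, which is $o\bigl(N^{2-2(1-\varepsilon)^2}(\log N)^{-1}\bigr)$ as soon as $\kappa>2(1-\varepsilon)C^{-1/2}$. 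Adding the five estimates shows the numerator is $o\bigl((\sum_{n\le N}P(A_n))^2\bigr)$, which is Lemma \ref{lem:7}; together with $\sum_nP(A_n)=\infty$ and Lemma \ref{renyi} this yields $P(A_n\ \text{i.o.})=1$, as needed for the lower half of (\ref{lilperm}).

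The step I expect to be the main obstacle is controlling $\sum_{\text{good}}e^{-C(\log m)^2}$. This error from Lemma \ref{lem:6} is only \emph{additive}, not a multiple of $P(A_m)P(A_n)$, so for small $m$ and large $n$ it is far larger than $P(A_m)P(A_n)$, and a fixed cutoff (such as $(\log N)^{1/\delta}$, which would suffice for all the other pieces) is too small to make its sum negligible against the denominator $N^{2-2(1-\varepsilon)^2-o(1)}$. The remedy is exactly the $N$-dependent, super-polylogarithmic cutoff $m_0(N)=\exp(\kappa\sqrt{\log N})$ with $\kappa$ large in terms of $\varepsilon$: it is small enough ($=N^{o(1)}$) that the crude estimate on the at most $N\,m_0(N)=N^{1+o(1)}$ discarded pairs still beats the denominator, yet large enough that $e^{-C(\log m)^2}\le N^{-C\kappa^2}$ on the surviving good pairs makes that term negligible.
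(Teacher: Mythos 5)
Your proof is correct and follows essentially the same route as the paper: the same three-way split into pairs where Lemma \ref{lem:6} applies, pairs with $n-m<m^{8\delta}$, and pairs with $m$ below an $N$-dependent cutoff, with trivial bounds on the two bad regions and the divergence of $\sum P(A_n)$ ($\gg N^{\varepsilon}$) doing the rest. The only difference is cosmetic: the paper takes the cutoff $m\geq CN^{\delta}$ (which makes the additive error $e^{-C(\log m)^2}$ superpolynomially small, so no constant $\kappa$ needs tuning) where you take $\exp(\kappa\sqrt{\log N})$, but both choices work for the same reasons.
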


\begin{proof}
By Lemma~\ref{lem:6} we have
\begin{align*}
&\sum_{\substack{1 \leq m, n \leq N\\
n - m \geq m^{8\delta}\\
m \geq CN^\delta}}
\bigl| P(A_m \cap A_n) - P(A_m) P(A_n)\bigr| \\
&\ll \biggl(\sum^N_{m = 1} P(A_m) m^{-\delta/16}\biggr) \biggl(
\sum^N_{n = 1}
P(A_n) \biggr) + N^2 e^{-c(\log N)^2} \\
& =o_N(1) \biggl(\sum^N_{m = 1} P(A_m) \biggr) \biggl(\sum^N_{n =
1} P(A_n) \biggr) +O(1) =o_N(1)\biggl(\sum^N_{m = 1} P(A_m)
\biggr)^2,
\end{align*}
since $\sum\limits^N_{n = 1} P(A_n) = +\infty$ by \eqref{star2}.
Further
\begin{align*}
&\sum_{\substack{1 \leq m, n \le N\\
0 \leq n - m \leq m^{8\delta}}} \bigl| P(A_m \cap A_n) - P(A_m)
P(A_n)\bigr|\\
&\leq \sum_{\substack{1 \leq m, n \leq N\\
m \leq n \leq m + m^{8\delta}}} 2P(A_m) \leq 2 \sum^N_{m = 1}
m^{8\delta} P(A_m) \leq 2N^{8\delta} \sum^N_{m = 1} P(A_m)
\end{align*}
and
$$
\sum_{\substack{1 \leq m \leq n \leq N\\
m \leq CN^\delta}} \bigl| P(A_m \cap A_n) - P(A_m) P(A_n) \bigr|
\leq \sum_{\substack{1 \leq m \leq n \leq N\\
m \leq CN^\delta}} 2P(A_n) \leq 2N^\delta \sum^N_{n = 1} P(A_n).
$$
The previous estimates imply
\begin{align*}
&\sum_{1 \leq m, n \leq N} \bigl| P(A_m \cap A_n) - P(A_m) P(A_n)
\bigr| \\
& \qquad \ll o_N(1) \left(\sum^N_{n = 1} P(A_n)\right)^2 +
 N^{8\delta}\left(\sum^N_{n = 1} P(A_n)\right).
\end{align*}
Since
$$
\sum^N_{n = 1} P(A_n) \gg \sum^N_{n = 1} \frac{1}{ n^{(1 -
\varepsilon)^2}(\log n)^{1/2} \,} \gg \sum^N_{n = 1} \frac1{n^{1 -
\varepsilon}} \gg N^\varepsilon,
$$
choosing $\delta < \varepsilon/8$ we get
$$
\frac{\sum_{1 \leq m, n \leq N} \bigl| P(A_m \cap A_n) - P(A_m)
P(A_n)\bigr|} {\Bigl(\sum^N_{n = 1} P(A_n) \Bigr)^2} \ll o_N(1)+
\frac{N^{8\delta}} {\sum^N_{n = 1} P(A_n)} \to 0.
$$
\end{proof}

We can now complete the proof of the lower half of the LIL. By
Lemmas \ref{renyi} and \ref{lem:7} and  we have with probability 1
\begin{equation}\label{ls}
\bigl | X_{\theta^n + 1} + \dots + X_{\theta^{n + 1}} \bigr | \ge
(1-\varepsilon)(2\gamma_n^2 \log\log \gamma_n)^{1/2}
\qquad\text{i.o.}
\end{equation}
where $\gamma_n=\bigl\| X_{\theta^n + 1} + \dots + X_{\theta^{n +
1}} \bigr\|$. By the already proved upper half of the LIL we have
\begin{equation}\label{us}
\bigl | X_{1} + \dots + X_{\theta^{n}} \bigr | \le
(1+\varepsilon)(2A_{\theta^n}^2 \log\log A_{\theta^n})^{1/2}
\qquad\text{a.s.}
\end{equation}
and  (\ref{starstar}) and the assumptions of Theorem \ref{th1}
imply
\begin{equation}\label{theta} A_{\theta^{n+1}}/A_{\theta^n}\ge C
\theta^{1/2},
\end{equation}
whence
\begin{align}\label{thet}
\gamma_n & \geq \bigl\| X_1 + \dots + X_{\theta^{n + 1}} \bigr\|
-\bigl\| X_1 + \dots +
X_{\theta^n}\bigr\| =A_{\theta^{n+1}}-A_{\theta^n}\\
& \ge A_{\theta^{n+1}}\left( 1-O(\theta^{-1/2})\right). \nonumber
\end{align}
Thus using (\ref{ls}), (\ref{us}), (\ref{theta}) and (\ref{thet})
we get with probability 1 for infinitely many $n$
\begin{align*}
&\bigl | X_{1} + \dots + X_{\theta^{n+1}}
\bigr | \\
&\ge (1-\varepsilon)(2\gamma_n^2 \log\log
\gamma_n)^{1/2}-(1+\varepsilon)
(2A_{\theta^n}^2 \log\log A_{\theta^n})^{1/2}\\
& \ge (1-2\varepsilon)
(2A_{\theta^{n+1}}^2 \log\log A_{\theta^{n+1}})^{1/2},
\end{align*}
provided we choose $\theta=\theta(\varepsilon)$ large enough. This
completes the proof of the lower half of the LIL.

To prove the Corollary, assume that
\begin{equation}\label{limG}
N^{-1/2}\sum_{k=1}^N f(n_{\sigma(k)}x)
\overset{d}{\longrightarrow} G
\end{equation}
with a nondegenerate distribution $G$. By Lemma \ref{lemma2} and
(\ref{starstar}) we have
$$
\E \left(\sum_{k=1}^N f(n_{\sigma(k)}x)\right)^4 \ll N^2,
$$
and thus the sequence
$$N^{-1}\left(\sum_{k=1}^N f(n_{\sigma(k)}x)\right)^2, \qquad N=1, 2,
\ldots$$ is bounded in $L_2$ norm and consequently uniformly
integrable. Thus the second moment of the left hand side of
(\ref{limG}) converges to the second moment $\gamma^2$ of $G$,
which is nonzero, since $G$ is nondegenerate. Thus we proved
(\ref{gamma1}), and since the nonnegativity of the Fourier
coefficients of $f$ implies (\ref{anm}), Theorem \ref{th1} yields
(\ref{fclt2}) and (\ref{lilperm2}).

In conclusion, we prove the remark made at the end of the
Introduction concerning the set $\Gamma_f$ of limiting variances
corresponding to  all permutations $\sigma$. Let $f$ be a function
satisfying (\ref{fcond})
with nonnegative Fourier coefficients. Assume that $f$ is even,
i.e. its Fourier series
$$
f(x) \sim \sum_{j=1}^\infty a_j \cos 2 \pi j x
$$
is a pure cosine series; the general case requires only trivial
changes.
Note that the Fourier coefficients of $f$ satisfy
(\ref{fouriercoeffs})
and by Kac \cite{ka} we have
$$
\int_0^1 \left( \sum_{k=1}^N f\left(2^k x\right) \right)^2 dx
\sim \gamma_f^2 N
$$
where $$\gamma_f^2= \|f\|^2+2\sum_{r=1}^\infty \int_0^1
f(x)f(2^rx)dx \ge \|f\|^2.
$$
We first note that for any permutation $\sigma: {\mathbb N}\to
{\mathbb N}$ we have
\begin{equation}\label{lsa}
\|f\|^2\le \frac{1}{N} \int_0^1 \left( \sum_{k=1}^N
f(2^{\sigma(k)}x)\right)^2\, dx \le \gamma_f^2
\end{equation}
for any $N\ge 1$. To see this, we observe that
\begin{align}\label{ls2}
&\int_0^1 \left( \sum_{k=1}^N f(2^{\sigma(k)}x)\right)^2\, dx\\
&= N\|f\|^2+ \sum_{1\le i\ne j\le N} \int_0^1 f(2^{\sigma(i)}x)
f(2^{\sigma(j)}x)\,dx\nonumber\\
&=N\|f\|^2+ \sum_{1\le i \ne j\le N} \int_0^1 f(x)
f(2^{|\sigma(j)-\sigma(i)|}x)\,dx\nonumber\\
&=N\|f\|^2+ \sum_{r=1}^\infty a_r^{(N)} \int_0^1 f(x) f(2^r
x)\,dx\nonumber
\end{align}
where
$$a_r^{(N)}=\#\{1\le i\ne j\le N: |\sigma(j)-\sigma(i)|=r\}.$$
Fix $r\ge 1$. Clearly, for any $1\le i \le N$ there exist at most
two indices $1\le j\le N$, $j\ne i$ such that
$|\sigma(j)-\sigma(i)|=r$. Hence $a_r^{(N)}\le 2N$ and by the
nonnegativity of the Fourier coefficients of $f$, the integrals in
the last line of (\ref{ls2}) are nonnegative. Thus (\ref{lsa}) is
proved. Next we claim that for any $\rho \in \left[ \|f\|,
\gamma_f \right]$ we can find a permutation $\sigma: \mathbb{N}
\to \mathbb{N}$ such that
\begin{equation}\label{permint}
\int_0^1 \left( \sum_{k=1}^N f\left(2^{\sigma(k)} x\right)
\right)^2 dx \sim \rho^2 N. \end{equation} To this end, we will
need
\begin{lem} \label{lemmavar} For some $J \geq 0$ let
$$
g(x)=\sum_{j=J+1}^\infty a_j \cos 2 \pi j x.
$$
Then for any set $\{m_1, \dots, m_N\}$ of distinct positive
integers we have
$$
\left\|\sum_{k=1}^N g\left(2^{m_k}x\right) \right\| \leq \left\{
\begin{array}{ll} 2\sqrt{N} &\textrm{for}~ J=0 \\
\sqrt{2N}J^{-1/2} & \textrm{for}~J \geq 1. \end{array}\right.
$$
\end{lem}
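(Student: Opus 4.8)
The plan is to estimate $\bigl\|\sum_{k=1}^N g(2^{m_k}x)\bigr\|^2$ directly, by expanding the square and using orthogonality of the trigonometric system together with the coefficient bound $|a_j|\le j^{-1}$ from~(\ref{fouriercoeffs}). Writing the double sum over $k,\ell$ as a diagonal part plus off-diagonal terms, and relabelling each off-diagonal pair so that $m_k<m_\ell$, the diagonal part equals $N\|g\|^2=\tfrac12 N\sum_{j>J}a_j^2\le\tfrac12 N\sum_{j>J}j^{-2}$.

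For an off-diagonal term with $m_k<m_\ell$, a frequency $j\,2^{m_k}$ (with $j>J$) appearing in $g(2^{m_k}x)$ can coincide with a frequency $j'2^{m_\ell}$ (with $j'>J$) appearing in $g(2^{m_\ell}x)$ only when $j=j'2^{m_\ell-m_k}$, and in that case the requirement $j>J$ holds automatically. Hence $\int_0^1 g(2^{m_k}x)g(2^{m_\ell}x)\,dx=\tfrac12\sum_{j'>J}a_{j'}a_{j'2^{m_\ell-m_k}}$, which by $|a_j|\le j^{-1}$ is at most $\tfrac12\,2^{-(m_\ell-m_k)}\sum_{j>J}j^{-2}$ in absolute value. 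The key step is then purely combinatorial: for a fixed $k$ the map $\ell\mapsto m_\ell-m_k$, restricted to those $\ell$ with $m_\ell>m_k$, is an injection into the positive integers, so $\sum_{\ell:\,m_\ell>m_k}2^{-(m_\ell-m_k)}\le\sum_{d\ge1}2^{-d}=1$. Summing the per-pair bound over all off-diagonal pairs therefore costs only a factor $N$, and altogether $\bigl\|\sum_k g(2^{m_k}x)\bigr\|^2\le\tfrac32 N\sum_{j>J}j^{-2}$.

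Finally I would evaluate the tail sum. For $J=0$ we have $\sum_{j\ge1}j^{-2}=\pi^2/6$, so the bound becomes $\tfrac{\pi^2}{4}N<4N$; for $J\ge1$ the telescoping estimate $\sum_{j>J}j^{-2}\le\sum_{j>J}\tfrac{1}{j(j-1)}=J^{-1}$ gives $\tfrac32 NJ^{-1}<2NJ^{-1}$. Taking square roots yields the two asserted inequalities. There is no genuine obstacle here; the only point requiring attention is the bookkeeping of the cross terms, i.e.\ observing that once $k$ is fixed the exponents $m_\ell-m_k$ run through distinct positive integers, which is exactly what keeps the total off-diagonal contribution linear in $N$ rather than quadratic.
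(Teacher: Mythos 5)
Your proof is correct and follows essentially the same route as the paper's: expand the square, observe that a frequency match $j\,2^{m_k}=j'2^{m_\ell}$ forces $j=j'2^{m_\ell-m_k}$, apply the coefficient bound $|a_j|\le j^{-1}$ to gain the factor $2^{-(m_\ell-m_k)}$, and sum the resulting geometric series over the distinct differences $m_\ell-m_k$ to keep the total linear in $N$. The only difference is cosmetic bookkeeping (you separate the diagonal from the off-diagonal terms, which even yields the slightly better constant $\tfrac32 N\sum_{j>J}j^{-2}$ in place of $2N\sum_{j>J}j^{-2}$).
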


\noindent  \emph{Proof:~} Similarly to (\ref{esnp}) we have
\begin{eqnarray*}
& & \int_0^1 \left( \sum_{k=1}^N g\left(2^{m_k}x\right) \right)^2 dx \\
& = & \frac{1}{2} \sum_{1 \leq k_1,k_2 \leq N} \sum_{j_1,j_2 \geq
J+1} a_{j_1}a_{j_2} \cdot \mathbf{1}\left(j_1 2^{m_{k_1}}
= j_2 2^{m_{k_2}} \right) \\
& \leq &  \sum_{1 \leq k_1 \leq k_2 \leq N} \sum_{j_1,j_2 \geq
J+1} \frac{1}{j_1 j_2} \cdot \mathbf{1}\left(j_1 2^{m_{k_1}} = j_2
2^{m_{k_2}} \right) \\
& \leq &  \sum_{1 \leq k_1 \leq k_2 \leq N} \sum_{j \geq J+1}
\frac{2^{m_{k_1}}}{j^2 2^{m_{k_2}}} \\
& \leq & N \sum_{v=0}^\infty 2^{-v} \sum_{j \geq J+1} \frac{1}{j^2} \\
& \leq & \left\{ \begin{array}{ll} 4N &\textrm{for}~ J=0 \\
2NJ^{-1} & \textrm{for}~J \geq 1. \end{array}\right. \quad
\square\\
\end{eqnarray*}

Let now $\rho\in [\|f\|, \gamma_f]$ be given and write
$$
\alpha = \frac{\rho^2-\|f\|^2}{\gamma_f^2 -\|f\|^2}.
$$
Clearly
$$
\alpha \in [0,1].
$$
Postponing the extremal cases $\alpha=0$ and $\alpha=1$, assume
$\alpha \in (0,1)$. Set
$$
\Delta_i =\{i^2+1,\dots, (i+1)^2\}, \quad i \geq 0.
$$
For every positive integer $k$ there exists exactly one number
$i=i(k)$ such that $k \in \Delta_i$. Now we set $n_1=1$ and define
a sequence $(n_k)_{k \geq 1}$ recursively by
\begin{eqnarray*}
n_k & = & \left\{ \begin{array}{lll} n_{k-1}+i+1 & \textrm{if}
&k = i^2+1~\textrm{for some $i$} \\
n_{k-1}+1 & \textrm{if} & k \in \left\{i^2+2,i^2+ \lceil 2 i
\alpha \rceil \right\}~\textrm{for some $i$} \\
n_{k-1}+i+1 & & \textrm{otherwise}.
\end{array}\right.
\end{eqnarray*}
For any $i \geq 0$, set
$$
p^{(i)}(x) = \sum_{j=1}^{2^i} a_j \cos 2 \pi j x, \qquad
r^{(i)}(x) = f(x)-p^{(i)}(x) = \sum_{j=2^i+1}^\infty a_j \cos 2
\pi j x.
$$
We want to calculate
$$
\int_0^1 \left( \sum_{k=1}^N f\left(2^{n_k} x\right) \right)^2 dx
$$
asymptotically. There is an $i$ such that $N \in \Delta_i$, and
since $N-i^2 \leq (i+1)^2-i^2 = 2i+1 \leq 2 \sqrt{N} + 1$, we have
by Lemma \ref{lemmavar}
\begin{eqnarray} \label{e1}
\left\| \sum_{k=1}^N f\left(2^{n_k} x\right) \right\| - \left\|
\sum_{k=1}^{i^2} f\left(2^{n_k} x\right) \right\| \leq \left\|
\sum_{k=i^2+1}^{N} f\left(2^{n_k} x\right) \right\| \leq 2 \left(2
\sqrt{N} + 1 \right)^{1/2}.
\end{eqnarray}
Using Lemma \ref{lemmavar} again, we get
\begin{eqnarray*}
& & \left\| \sum_{k=1}^{i^2} f\left(2^{n_k} x\right) \right\| \\
& = &  \left\| \sum_{h=0}^{i-1} \left( \sum_{k \in \Delta_h}
p^{(h)}\left(2^{n_k} x\right)+ \sum_{k \in \Delta_h} r^{(h)}
\left(2^{n_k} x\right) \right)\right\|,
\end{eqnarray*}
and
\begin{eqnarray}
& & \left\| \sum_{k=1}^{i^2} f\left(2^{n_k} x\right) \right\| -
\left\| \sum_{h=0}^{i-1} \left( \sum_{k \in \Delta_h} p^{(h)}
\left(2^{n_k} x\right) \right) \right\| \nonumber\\
& \leq & \sum_{h=0}^{i-1} \left\| \left( \sum_{k \in \Delta_h}
r^{(h)}\left(2^{n_k} x\right) \right) \right\| \nonumber\\
& \leq & \sum_{h=0}^{i-1} \sqrt{2 |\Delta_h|} 2^{-h/2} \nonumber\\
& \leq & \sum_{h=0}^{i-1} \sqrt{2 (2h+1)} 2^{-h/2} \nonumber\\
& \ll & 1.  \label{e2}
\end{eqnarray}
Now we calculate
$$
\left\| \sum_{h=0}^{i-1} \left( \sum_{k \in \Delta_h}
p^{(h)}\left(2^{n_k} x\right) \right) \right\|.
$$
By the construction of the sequence $(n_k)_{k \geq 1}$, the
functions
\begin{equation}\label{ort}
\sum_{k \in \Delta_{h_1}} p^{(h_1)}\left(2^{n_k} x\right), \qquad
\sum_{k \in \Delta_{h_2}} p^{(h_2)}\left(2^{n_k} x\right)
\end{equation}
are orthogonal if $h_1 \neq h_2$. 
In fact, if $h_2 > h_1$, and $k_1 \in \Delta_{h_1},~ k_2 \in
\Delta_{h_2}$, then $n_{k_2} \geq n_{k_1}+h_2+1$, which implies
that the largest frequency of a trigonometric function in the
Fourier series of $p^{(h_1)}(2^{n_{k_1}}x)$ is $2^{h_1}
2^{n_{k_1}} < 2^{n_{k_2}}$. Thus the functions in (\ref{ort}) are
really orthogonal. A similar argument shows that for fixed $h$ and
$k_1,k_2 \in \Delta_h$ the functions $p^{(h)}(2^{n_{k_1}}x)$ and
$p^{(h)}(2^{n_{k_2}}x)$ are orthogonal if not both $k_1$ and $k_2$
are in the set $\left\{h^2+1,h^2+ \lceil 2 h \alpha \rceil
\right\}$. Thus
\begin{eqnarray}
& & \int_0^1 \left( \sum_{h=0}^{i-1} \left( \sum_{k \in \Delta_h}
p^{(h)}\left(2^{n_k} x\right) \right) \right)^2 \label{g12} \\ & =
& \sum_{h=0}^{i-1} \left(  \int_0^1 \left( \sum_{k \in
\left\{h^2+1,h^2+ \lceil 2 h \alpha \rceil \right\}}
p^{(h)}\left(2^{n_k} x\right)  \right)^2 dx + \sum_{k \in
\left\{h^2+ \lceil 2 h \alpha \rceil +1, (h+1)^2 \right\}}
\|p^{(h)}\|^2 \right). \nonumber
\end{eqnarray}
For $h \to \infty$,
$$
\int_0^1 \left( \sum_{k \in \left\{h^2+1,h^2+ \lceil 2 h \alpha
\rceil \right\}} p^{(h)}\left(2^{n_k} x\right)  \right)^2 dx \sim
\gamma_f^2 \left( h^2 + \lceil 2 h \alpha \rceil - (h^2+1) \right)
\sim \gamma_f^2 2 h \alpha,
$$
and
$$
\sum_{k \in \left\{h^2+ \lceil 2 h \alpha \rceil +1, (h+1)^2
\right\}} \|p^{(h)}\|^2 \sim \|f\|^2 \left((h+1)^2 -(h^2+ \lceil 2
h \alpha \rceil)\right) \sim \|f\|^2 2h (1-\alpha).
$$
Thus by (\ref{g12}) for $i \to \infty$
\begin{eqnarray}
& & \int_0^1 \left( \sum_{h=0}^{i-1} \left( \sum_{k \in \Delta_h}
p^{(h)}\left(2^{n_k} x\right) \right) \right)^2 \nonumber\\
& \sim & \sum_{h=0}^{i-1} \left( \gamma_f^2 2h \alpha+\|f\|^2 2h
(1-\alpha)\right) \nonumber\\
& \sim & \left(\gamma_f^2 \alpha + \|f\|^2(1-\alpha) \right)
\sum_{h=0}^{i-1} 2h \nonumber\\
& \sim & \rho^2 i^2. \label{e4}
\end{eqnarray}
Combining (\ref{e1}), (\ref{e2}), (\ref{e4}) we finally obtain
\begin{equation}\label{permint2}
\int_0^1 \left( \sum_{k=1}^N f\left(2^{n_k} x\right) \right)^2 dx
\sim \rho^2 N \qquad \text{as} \ N\to\infty.
\end{equation}
Note that in our argument we assumed $\alpha\in (0, 1)$, i.e.\
that $\rho$ is an inner point of the interval $[\|f\|, \gamma_f]$.
The case $\alpha=1$ (i.e.\ $\rho=\gamma_f$) is trivial, with
$n_k=k$. In the case $\alpha=0$ we choose $(n_k)$ growing very
rapidly and the theory of lacunary series implies (\ref{permint2})
with $\rho=\|f\|$.

Relation (\ref{permint2}) is not identical with (\ref{permint}),
since the sequence $(n_k)$ is not a permutation of $\mathbb N$.
However, from $(n_k)$ we can easily construct a permutation
$\sigma$ such that (\ref{permint}) holds.
Let $H$ denote the set of positive integers not contained in
$(n_k)$ and insert the elements of $H$ into the sequence $n_1,
n_2, \ldots$ by leaving very rapidly increasing gaps between them.
The so obtained sequence  is a permutation $\sigma$ of $\mathbb N$
and if the gaps between the inserted elements grow sufficiently
rapidly, then clearly the asyimptotics of the integrals in
(\ref{permint}) and (\ref{permint2}) are the same, i.e.
(\ref{permint}) holds. This completes the proof of the fact that
the class of limits
$$
\lim_{N\to\infty} \frac{1}{N}\int_0^1 \left( \sum_{k=1}^N
f\left(2^{\sigma(k)} x\right) \right)^2 dx
$$
is identical with the interval $[\|f\|^2, \gamma_f^2]$.

In conclusion we show that without assuming the nonnegativity of
the Fourier coefficients of $f$, the class $\Gamma_f$ of limiting
variances in Theorem \ref{th1} for permuted sequences
$f(n_{\sigma(k)}x)$ is not necessarily the closed interval with
endpoints $\|f\|^2$ and $\gamma_f^2$.  Let
$$f(x)=\cos 2\pi x-\cos 4\pi x+\cos 8\pi x$$
and again $n_k=2^k$. Then taking into account the cancellations in
the sum $\sum _{k=1}^N f(n_kx)$ we get
$$
\sum_{k=1}^N f(n_kx) = \cos 4\pi x + \cos 16\pi x+ \cos 32\pi x
+\cos 64\pi x +\cos 128\pi x +\ldots
$$
whence
$$\int_0^1 \left(\sum_{k=1}^N f(n_kx)\right)^2 dx\sim N/2$$
so that $\gamma_f^2=1/2$ and clearly $\|f\|^2=3/2$. (Note that in
this case $\gamma_f< \|f\|$.) Now
\begin{align*}
&\sum_{k=1}^N f(4^kx)= \cos 8\pi x- \cos 16\pi x\\
&\phantom {9999999999999999} +2\cos 32\pi x-\cos 64 \pi x +2\cos
128\pi x-\cos 256x +2\cos 512 x - \ldots
\end{align*}
and thus
$$\int_0^1 \left(\sum_{k=1}^N f(4^kx)\right)^2 dx\sim 5N/2.$$
Similarly as above, we can get a permutation $\sigma$ of $\mathbb
N$ such that
$$\int_0^1 \left(\sum_{k=1}^N f(2^{\sigma(k)}x)\right)^2 dx\sim 5N/2.$$


\begin{thebibliography}{20}

\bigskip
\bibitem{am} {\sc F.\ Amoroso} and {\sc E.\ Viada}, Small points on subvarieties
of a torus. To appear.

\bibitem{ba} {\sc R.C.\ Baker}, Rieman  sums and Lebesgue integrals.
{\em Quart.\ J.\ Math.\ Oxford Ser.2} {\bf 58} (1976), 191--198.

\bibitem{br} {\sc R.C.\ Bradley}, A stationary rho-mixing Markov chain which is
not ``interlaced'' rho-mixing.  \emph{J. Theoret.\ Probab.}  14,
717--727 (2001).

\bibitem{eg} {\sc P.\ Erd\H{o}s} and {\sc I.S.\ G\'al},
On the law of the iterated logarithm. \emph{Proc.\ Nederl.\ Akad.\
Wetensch.} Ser A {\bf 58}, 65-84, 1955.

\bibitem{ev} {\sc J.-H.\ Evertse, R. H.-P.\ Schlickewei} and
{\sc W.\ M.\ Schmidt}, Linear equations in variables which lie in
a multiplicative group, \emph{Ann.\ of Math.} {\bf 155} (2002),
no. 3, 807--836.

\bibitem{ft} {\sc K.~Fukuyama}, The law of the iterated logarithm for
discrepancies of $\{\theta^n x\}$.  \emph{Acta Math.\ Hungar.}
{\bf 118}, 155--170, 2008.


\bibitem{ft2} {\sc K.~Fukuyama}, The law of the iterated logarithm for
the discrepancies of a permutation of $\{n_kx\}$. \emph{Acta
Math.\ Acad.\ Sci. Hung.}, to appear.


\bibitem{ft3} {\sc K.~Fukuyama} and {\sc K.\ Nakata}, Metric discrepancy
result for the Hardy-Littlewood-P\'olya sequences.
\emph{Monatshefte Math.}, to appear.


\bibitem{fupe} {\sc K.~Fukuyama} and {\sc B.\ Petit}, Le th\'eor\`eme
limite central our les suites de R. C. Baker. \emph{Ergodic Theory
Dynamic Systems} {\bf 21}, 479--492 (2001)


\bibitem{ga} {\sc I.S.\ G\'al}, A theorem concerning Diophantine approximations,
\emph{Nieuw Archief voor Wiskunde} {\textbf 23} (1949), 13--38.








\bibitem{ka} {\sc M.\ Kac}, On the distribution of values of sums
 of the type $\sum f(2^k t)$.  \emph{Ann.\ of Math.} \textbf{47},
 33--49,  1946.


\bibitem{kh} {\sc A.\ Khinchin},  Ein Satz \"uber Kettenbruche mit
arithmetischen Anwendungen. \emph{Math.\ Zeitsch.} \textbf{18}
(1923), 289--306.


\bibitem{la} {\sc E.\ Landau},  Vorlesungen \H{u}ber Zahlentheorie,
Vol.~2, S.\ Hirzel, Leipzig, 1927.

\bibitem{mar} {\sc R.\ Marstrand}, On Khinchin's conjecture about strong uniform
distribution.  \emph{Proc.\ London Math.\ Soc.} \textbf{21}
(1970), 540-556.

\bibitem{na} {\sc R.\ Nair,} On strong uniform distribution.  \emph{Acta Arith.}
{\bf 56}  (1990),  183--193.

\bibitem{ni} {\sc E.\ M.\ Nikishin}, Resonance theorems and
superlinear operators.  {\it Uspehi Mat.\ Nauk}  \text{\bf 25/6}
(1970), 129--191.




\bibitem{ph1969} {\sc W.~Philipp}, Das Gesetz vom iterierten Logarithmus f\"ur
stark mischende station\"are Prozesse. Z.\
Wahrscheinlichkeitstheorie verw.\ Gebiete \textbf{8} (1967),
204--209.


\bibitem{ph1994} {\sc W.\ Philipp}, Empirical distribution functions and strong
approximation theorems for dependent random variables. A problem
of Baker in probabilistic number theory. \emph{Trans. Amer. Math.
Soc.} {\bf 345} (1994), 707-727.

\bibitem{re} {\sc P.\ R\'ev\'esz}, The law of the iterated logarithm for
multiplicative systems.  \emph{Indiana Univ.\ Math.\ J.}  {\bf 21}
(1971/72), 557--564.

\bibitem{sch} {\sc W.\ Schmidt}, Diophantine approximation.  Lecture Notes Math. \
{\bf 785}, Springer, 1980.

\bibitem{sp} {\sc F.\ Spitzer}, Principles of random walk. Van Nostrand, 1964.

\bibitem{sw} {\sc G.\ Shorack} and {\sc J.\ Wellner},
Empirical Processes with Applications to Statistics. Wiley, New
York 1986.


\bibitem{thue} {\sc A.\ Thue}, Bemerkungen \"uber gewisse
N\"aherungsbr\"uche algebraischer Zahlen. \emph{Skrift Vidensk
Selsk. Christ.} 1908, Nr.\ 3.


\bibitem{tij} {\sc R.\ Tijdemann}, On integers with many small prime factors,
\emph{Compositio Math.} {\bf 26} (1973), 319-330.


\bibitem{we} {\sc H.\ Weyl}, \"Uber die Gleichverteilung von Zahlen mod. Eins,
Math. Ann. {\bf 77} (1916), 313-352.



\bibitem{zt} {\sc A. Zygmund},
\emph{Trigonometric series. Vol. I, II.} Cambridge 
Library. Cambridge University Press, Cambridge, 2002.


\end{thebibliography}
\end{document}